\theoremstyle{theorem}
\newtheorem{theorem}{Theorem}
\newtheorem{lemma}{Lemma}
\newtheorem{proposition}{Proposition}
\theoremstyle{definition}
\newtheorem{definition}{Definition}
\title{Coxeter groups and meridional rank of links}
\author{Sebastian Baader, Ryan Blair and Alexandra Kjuchukova}
\begin{document}

\begin{abstract} We prove the meridional rank conjecture for twisted links and arborescent links associated to bipartite trees with even weights. These links are substantial generalizations of pretzels and two-bridge links, respectively. Lower bounds on meridional rank are obtained via Coxeter quotients of the groups of link complements. Matching upper bounds on bridge number are found using the Wirtinger numbers of link diagrams, a combinatorial tool developed by the authors. 
\end{abstract}
\maketitle

\section{Introduction}
The meridional rank $\mu$ of a link $L$ in $S^3$ is the minimal number of meridians of $L$ needed to  generate  $\pi_1(S^3 \setminus L)$. It is an immediate consequence of the Wirtinger presentation for $\pi_1(S^3 \setminus L)$ in a suitable diagram
that $\mu(L)$ is bounded above by the bridge number $\beta(L)$.
The meridional rank conjecture asks whether the equality $\mu(L)=\beta(L)$ holds. This question originates with Cappell and Shaneson's work on the Smith Conjecture~\cite{CS1978note} and is given as problem 1.11 in~\cite{kirby1995problems}. 

Boileau and Zimmermann~\cite{boileau1989orbifold} showed that  $\mu=2$ implies $\beta=2$. The equality $\beta=\mu$ has been established in various special cases, such as Montesinos links~\cite{boileau1985nombre},  torus links~\cite{RZ87}, and others whose complements satisfy certain geometric conditions~\cite{LM93, CH14, boileau2017meridionalrank, boileau2017meridional, baader2017symmetric}. 

We prove the meridional rank conjecture for two new classes, twisted links and arborescent links associated with bipartite trees with even weights. We also explicitly compute the bridge numbers of all links in these classes.  

To define twisted links, let $D$ be a diagram of a link $L$, admitting no reducing Reidemeister moves of type I and II, and let $F$ be one of the two surfaces with boundary $L$ obtained from a checkerboard coloring of the regions in the plane determined by $D$. We regard the surface $F$ as a union of disks and twisted bands, whose combinatorics we store in a plane graph $\Gamma \subset \mathbb{R}^2$ with weighted edges. We say the surface $F$ is twisted if every band has at least one full twist, and if the plane dual graph $\Gamma^\ast \subset \mathbb{R}^2$ of $\Gamma$ has no multiple edges. 
A link is twisted if it admits a diagram which determines a twisted surface via a checkerboard coloring. Figure~\ref{pretzelknot}, a pretzel~knot,~is~an~example~of~a~twisted~diagram.

\begin{theorem}\label{twisted}
	The meridional rank conjecture holds for twisted links. The bridge number of a twisted link is equal to the number of planar regions in the complement of the projection of a twisted surface. 
\end{theorem}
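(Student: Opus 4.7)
The plan is to bracket both $\mu(L)$ and $\beta(L)$ by the number $n$ of planar regions in the complement of the projection $\pi(F)$ of a twisted surface for $L$. Since $\mu(L) \leq \beta(L)$ always holds, it suffices to prove $\mu(L) \geq n$ and $\beta(L) \leq n$; equality throughout then yields both claims of the theorem. The two bounds will be produced by the two techniques advertised in the abstract: a Coxeter quotient of the link group on the one hand, and a Wirtinger coloring of the diagram on the other.

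For the lower bound I construct a Coxeter group $W$ with generators $\{s_R\}$ indexed by the planar regions $R$ in the complement of $\pi(F)$, where the exponent $m_{R,R'}$ between two regions separated by a twisted band is dictated by the twist weight of that band. The hypothesis that $\Gamma^*$ has no multiple edges guarantees that this Coxeter matrix is unambiguously defined, since at most one band lies between any two regions. Using the Wirtinger presentation of $\pi_1(S^3 \setminus L)$, I define a map to $W$ by sending each meridian to the reflection $s_R$ attached to the region lying on a fixed side of the corresponding arc, and check crossing by crossing that this is a well-defined surjection. The full-twist hypothesis is what makes every prescribed exponent at least $2$ and keeps the relations from collapsing, so $W$ has rank exactly $n$. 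A reflection-rank lemma in the spirit of Dyer then forces any generating set of conjugates of the $s_R$ to contain at least $n$ elements, giving $\mu(L) \geq n$.

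For the matching upper bound I invoke the coincidence of the Wirtinger number with the bridge number: producing a Wirtinger coloring of the given diagram with $n$ seed arcs shows $\beta(L) \leq n$. I choose exactly one seed arc in each planar region of the complement of $\pi(F)$. The critical step is to extend the coloring across each twisted band of $F$: two seeds on opposite sides of the band suffice, because successive applications of the Wirtinger rule propagate the color through every intermediate arc inside a full twist. Once all band arcs are colored, the coloring extends to the remaining arcs along the boundaries of the disk regions, and the absence of bigons in $\Gamma^*$ ensures that no region must be revisited.

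I expect the main obstacle to be the lower-bound step, specifically the construction of the Coxeter quotient. Two points require care. First, the local verification that the Wirtinger conjugation relations at crossings inside a full-twist region descend precisely to the intended Coxeter relations $(s_R s_{R'})^{m_{R,R'}} = 1$, which needs a careful tracking of band orientations and of how the two adjacent regions appear on the two sides of successive crossings. Second, the reflection-rank conclusion itself: one must rule out the possibility of generating $W$ by fewer than $n$ conjugates of the $s_R$, which either requires a general Coxeter-theoretic appeal or an ad hoc argument exploiting the plane-graph structure of $\Gamma$. The Wirtinger side, by contrast, should be a relatively clean combinatorial induction on the number of bands.
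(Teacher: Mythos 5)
Your overall architecture is exactly the paper's: bound $\mu(L)$ from below by the rank of a Coxeter quotient whose generators are indexed by the regions of the complement of the surface projection (the vertices of the dual graph $\Gamma^*$), bound $\beta(L)$ from above by exhibiting a Wirtinger coloring with one seed per such region, and sandwich. The reflection-rank fact you need is exactly the one the paper cites (rank of $C(\Gamma^*)$ equals the minimal number of reflections generating it). However, your lower-bound construction has a genuine error: the rule ``send each meridian to the reflection $s_R$ of the region on a fixed side of the arc'' does not define a homomorphism. Inside a twist region with $k\geq 2$ half-twists, each arc passes over one crossing and under the next, so it switches sides and is adjacent to \emph{both} regions $R$ and $R'$; and if you resolve the ambiguity by labelling the arcs only with $s_R$ and $s_{R'}$, the Wirtinger relation at each interior crossing forces $(s_Rs_{R'})^2=1$ regardless of $k$, killing the intended exponent. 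The correct assignment (Brunner's, recalled in the paper's two-braid example) sends the interior arcs to the alternating conjugates $(s_Rs_{R'})^js_R$ and $(s_Rs_{R'})^js_{R'}$, with only the arcs on the disk boundaries going to honest generators; the relation $(s_Rs_{R'})^{|k|}=1$ is then what closes up the labelling at the top of the twist region. You flag this verification as delicate, but as stated your map would fail it rather than merely require care.

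On the upper bound you have the right target ($n$ seeds, one per region) but you underestimate the difficulty, which is where most of the paper's work lies. A single seed arc in a region colors only one arc adjacent to that region; to propagate through a band you need \emph{both} arcs entering one end of the band already colored, and these live on the boundaries of the fat vertices, which in turn can only be colored by first traversing other bands. So the existence of a valid ordering (and of a valid \emph{choice} of one seed per region) is a global combinatorial statement about the fat-vertex graph $\Gamma$, not a local one. The paper proves it by first reducing to graphs with no leaves, cut vertices, or cut edges (using additivity of bridge number and Coxeter rank under connected sum to handle cut vertices), and then running a structural induction in which every such graph is built from a single disk by adding loops, subdividing edges, and adding edges --- the existence of an embedded theta subgraph being the key lemma that lets one undo the last operation without leaving the class. ``Induction on the number of bands'' does not obviously work, because deleting a band can create leaves or cut edges, i.e., can leave the class of twisted diagrams. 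Both gaps are repairable, and the skeleton of your argument is sound, but as written neither half is a proof.
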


The class of arborescent links generalizes both two-bridge links and Montesinos links. They are defined by plumbing twisted bands in a tree-like pattern. More precisely, an arborescent link is associated to a plane tree $T \subset \mathbb{R}^2$ with weighted vertices. The vertices are in one-to-one correspondence with embedded annuli; their integer weights indicate the number of half-twists of the corresponding annuli. A precise definition of how the annuli are to be plumbed together along the edges of~$T$ can be found in~\cite{gabaigenera}. We will only consider trees with even non-zero weights, a condition which implies that all the bands involved are orientable, and that their union forms a minimal genus Seifert surface of the corresponding link, see again~\cite{gabaigenera}.
For technical reasons, we will also restrict the class of trees. A (plane) tree is called bipartite, if all the vertices of valency at least three carry the same color with respect to any of the two bipartite colorings of that tree. An example of a plane bipartite tree with even weights is shown in Figure~\ref{tree}. The corresponding arborescent link is shown in Figure~\ref{arborescentlink} (with some additional labels for later use). The class of arborescent links associated with even weight bipartite trees contains all two-bridge links. Indeed, the latter correspond to even weight trees ``without branches", i.e. to trees homeomorphic to an interval, see~\cite{conway1970enumeration}. On the other extreme, the class of arborescent links associated with even weight bipartite trees also contains the class of slalom divide links defined by A'Campo~\cite{a1998planar}. In fact, these links are obtained by plumbing positive Hopf bands along bipartite trees. In our setting, this means that all weights are two. This follows from the visualisation algorithms for divide links described in~\cite{van2000link} and~\cite{hirasawa2002visualization}.

\begin{figure}[h]
\begin{center}
\includegraphics[scale=1.8]{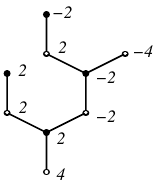}
\caption{Bipartite tree with even weights. \label{tree}}
\end{center}
\end{figure}

\begin{theorem}\label{arbs}
	The meridional rank conjecture holds for arborescent links associated with bipartite trees with non-zero even weights. The bridge number of such a link is equal to the number of leaves of the underlying tree.
\end{theorem}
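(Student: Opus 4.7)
The plan is to establish the two inequalities $\beta(L) \leq \ell(T)$ and $\mu(L) \geq \ell(T)$, where $\ell(T)$ denotes the number of leaves of the tree $T$; combined with the standard bound $\mu(L) \leq \beta(L)$, these force the chain of equalities $\mu(L) = \beta(L) = \ell(T)$.

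For the upper bound, I would work with the natural diagram obtained by placing each plumbed annulus at its vertex of $T$ in a planar embedding and arranging the twists inside each band. In such a diagram one can Wirtinger-propagate meridians through any band carrying an even number of half-twists, because each pair of consecutive crossings in a full twist permits one to recover both outgoing strands from the two incoming meridians. Starting from one meridian at each leaf band and propagating inward along the tree, one reaches every arc of the diagram. This shows the Wirtinger number is at most $\ell(T)$, hence $\beta(L) \leq \ell(T)$ by the authors' Wirtinger--bridge inequality.

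For the lower bound, the approach mirrors the Coxeter quotient strategy used in Theorem~\ref{twisted}. I would construct a homomorphism $\varphi\colon \pi_1(S^3 \setminus L) \to W$, where $W$ is a Coxeter group naturally associated with $T$, sending every meridian to a reflection. A reasonable candidate for $W$ is built from a Coxeter diagram whose nodes correspond to the leaves of $T$ and whose edge labels are read off from the weights along the paths between leaves in $T$. Even weights ensure that the defining Wirtinger relations of each band (which come from full twists) descend to Coxeter-type relations, since the square of the product of two reflections is trivial modulo an appropriate central element. The bipartite hypothesis on $T$ enters when verifying $\varphi$ is well-defined at branching vertices: because all high-valency vertices lie on one side of the bipartition, the local meridian combinatorics at each branching are of a single, controllable type, and the rank-$2$ parabolic subgroups generated at these branchings are compatible with the global Coxeter structure. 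Once $\varphi$ is defined, one shows that the image contains $\ell(T)$ reflections not generated by any proper subfamily, either by a Tits-style linear representation argument or by showing $W$ already has meridional rank $\ell(T)$ as a Coxeter group.

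The main obstacle, I expect, is verifying that the constructed Coxeter quotient is sufficiently rich: one must rule out unwanted collapses that would reduce the rank below $\ell(T)$, and this is precisely where the bipartite and even-weight hypotheses must be used in an essential way. A secondary, potentially unifying, tack would be to realize the plumbed Seifert surface $F$ directly as a checkerboard surface of a twisted diagram and invoke Theorem~\ref{twisted}: the disks-and-bands combinatorics of $F$ is recorded by $T$ itself, every band has at least one full twist by the even-weight hypothesis, and the bipartite condition is tailor-made to enforce the no-multiple-edges condition on the plane dual graph $\Gamma^\ast$. If this reduction is available, then a final counting step identifying the planar regions in the complement of the projection of $F$ with the leaves of $T$ (which is plausible by a small induction on $T$, peeling off leaves one at a time) completes the proof in one stroke.
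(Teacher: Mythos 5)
Your overall strategy (an upper bound $\beta(L)\le \ell(T)$ via the Wirtinger number of the natural arborescent diagram, and a matching lower bound $\mu(L)\ge \ell(T)$ via a Coxeter quotient sending meridians to reflections) is exactly the paper's, but both halves stop short of where the actual work is, and each has a concrete gap. For the upper bound: a coloring move requires that, at some crossing, \emph{both} the other under-strand and the over-strand already be colored; hence a single seed meridian placed on each leaf band colors nothing further, and ``propagating inward'' never gets started. The paper's Proposition~\ref{leaves} instead inducts on the number of branches, placing one seed \emph{inside} each rational tangle together with one base seed $s_0$, positioned so that pairs of colored strands reach all four boundary strands of each tangle before the coloring moves outward. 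For the lower bound: you never construct $\varphi$, and the two places you wave at the hypotheses are not correct mechanisms. A Coxeter graph on the leaves with labels ``read off from paths between leaves'' is not the right target group, and ``trivial modulo an appropriate central element'' does not explain the role of evenness. In the paper's Proposition~\ref{coxeterquotient} the quotient is built inductively, one new generator per added branch; bipartiteness is used to guarantee that every twist region at a vertex of valency at least three carries a \emph{single} generator (so a branch can be grafted there), and evenness is used at exactly one point: when a branch is inserted at a valency-two vertex in the interior of a rational tangle, the old relation between the two adjacent generators must be replaced by $(ab)^2=1$, which is only consistent because all twist regions are even. The final step, that a rank-$n$ Coxeter quotient forces $\mu\ge n$, is Proposition~\ref{lowerbound}, resting on the cited fact that $r(C(\Gamma))$ equals the minimal number of reflections generating $C(\Gamma)$; no Tits-representation argument is needed, but some such citation is, and ``showing $W$ already has meridional rank $\ell(T)$'' is not automatic for an arbitrary candidate $W$.

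Your proposed unifying reduction to Theorem~\ref{twisted} does not work and should be discarded. The plumbed surface of $L(T)$ retracts to a graph that is essentially the plane tree $T$; a plane tree has a single complementary region, so its plane dual has one vertex and many loops, violating the simplicity requirement on $\Gamma^\ast$ and yielding a region count of $1$ rather than $\ell(T)$. More decisively, the two classes do not nest: two-bridge links are arborescent links of bipartite even trees with bridge number two and arbitrarily large hyperbolic volume, whereas the paper notes that twisted knots (with enough twisting per region) have volume bounded above in terms of bridge number. So high-volume two-bridge links cannot admit twisted diagrams, and no reduction of Theorem~\ref{arbs} to Theorem~\ref{twisted} is available.
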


\begin{figure}[h]
\begin{center}
\includegraphics[scale=0.7]{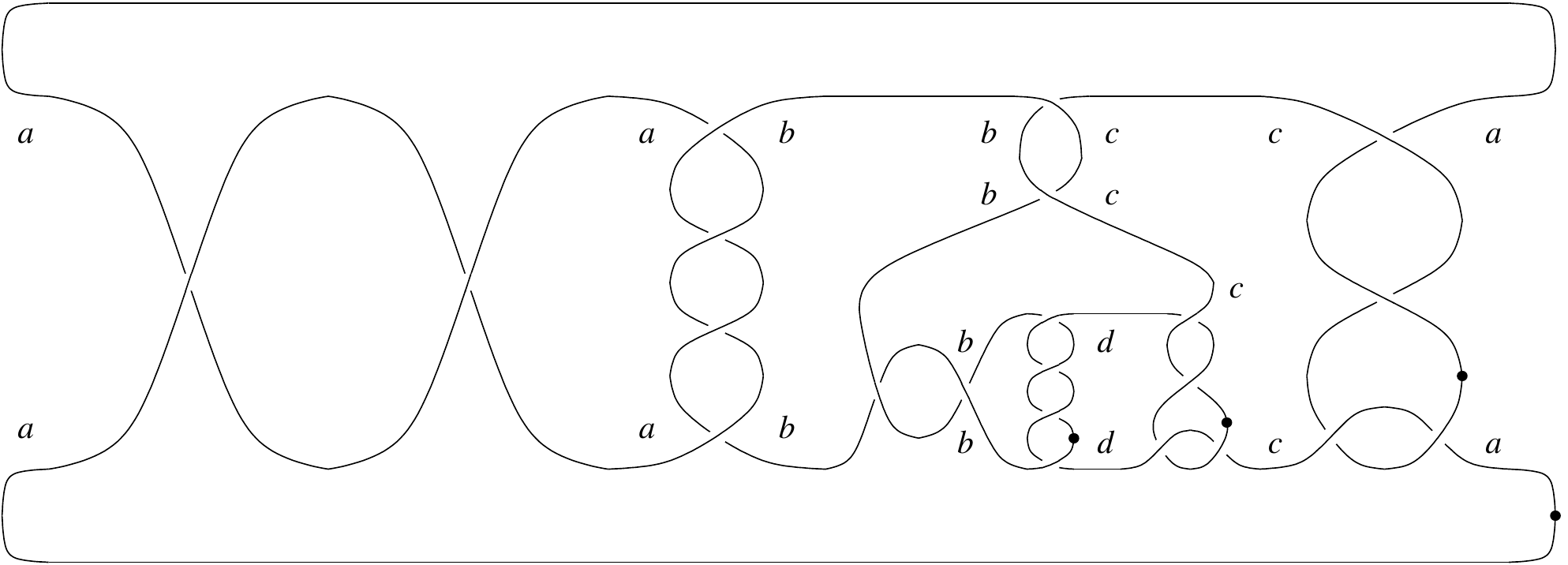}
\caption{Arborescent knot. \label{arborescentlink}}
\end{center}
\end{figure}

We prove Theorems~\ref{twisted} and~\ref{arbs} by obtaining an upper bound on the bridge number $\beta(L)$ 
and a matching lower bound on the meridional rank $\mu(L)$, from a suitable diagram. The lower bound on $\mu$ arises from a Coxeter quotient of $\pi_1(S^3 \setminus L)$ mapping meridians to reflections; see Proposition~\ref{lowerbound}.  
The upper bound on $\beta$ comes from the Wirtinger number $\omega$ of a link diagram $D$; see Section~\ref{upper}. 
The bridge number $\beta(L)$ equals the minimum value of $\omega(D)$  over all diagrams $D$ of $L$~\cite[Theorem 1.3]{blair2020wirtinger}. 
As we will see, if a link $L$ admits a diagram $D$ and a Coxeter quotient of rank equal to $\omega(D)$, the meridional rank conjecture holds for $L$. Our approach was inspired by a method for obtaining Coxeter and Artin quotients from knot diagrams introduced in~\cite{brunner1992geometric}.

Besides establishing the meridional rank conjecture for new classes of links, our technique also recovers the result for pretzel links and, more generally, Montesinos links, in a new way. We also remark that, for knots whose meridional rank is detected via Coxeter quotients, meridional rank is seen to satisfy Schubert additivity under connected sum without relying on equality with bridge number. Additivity of meridional rank under connected sum, which is implied by the meridional rank conjecture, is an interesting open question in its own right.

\section{Lower bounds on meridional rank}\label{lower}

The rank of a group~$G$ is the minimal cardinality among all generating sets of~$G$. The meridional rank of a link $L$ is clearly bounded below by the rank of its fundamental group, thus by the rank of any quotient of the latter. However, this is not an effective bound, since there is an abundance of links with rank two fundamental groups and arbitrarily high meridional rank, for example torus links. This fact carries over to a variety of groups with a geometric flavour: mapping class groups, symmetric groups and finite irreducible Coxeter groups have rank two, but they are typically not generated by a small number of standard generators, such as Dehn twists, transpositions and reflections, respectively. We should thus expect much better lower bounds on the meridional rank of links by considering quotients with a distinguished conjugacy class (on which the meridians of the link are to be mapped), which does not admit a small number of generators. We will apply this method to the class of Coxeter groups, and the conjugacy class of reflections. Recall that the Coxeter group $C(\Delta)$ associated with a finite simple graph $\Delta$ with weighted edges is the group whose generators are in bijection with the vertices of $\Delta$, subject to the following two types of relations:
\begin{enumerate}
\item $s^2=1$ for all generators $s$,
\item $(st)^k=1$, for all pairs of generators $s,t$ connected by an edge of weight $k \in \mathbb{N}$.
\end{enumerate}
Throughout this paper, we assume all edge weights to be at least two.
Elements of a Coxeter group $G=C(\Gamma)$ conjugate to any of the generators are called reflections. 
We refer to the number of vertices of the graph $\Gamma$ as the rank $r(C(\Gamma))$. It equals the minimal number of reflections needed to generate $C(\Gamma),$ see for example Lemma~2.1 in~\cite{felikson2010reflection}. Note that there exist graphs $\Gamma_1$ and $\Gamma_2$ with different numbers of vertices and such that the groups $C(\Gamma_1)$ and  $C(\Gamma_2)$ are isomorphic. In particular, the notions of reflection and rank of a Coxeter group depend on a choice of generating set. We thus obtain the following lower bound on the meridional rank of links.

\begin{proposition} \label{lowerbound}
Let $L$ be a link whose fundamental group surjects onto a Coxeter group $C(\Gamma)$, so that all meridians are mapped to reflections in $C(\Gamma)$. Then
$$\mu(L) \geq r(C(\Gamma)).$$ 
\end{proposition}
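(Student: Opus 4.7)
The plan is to reduce the proposition directly to the cited fact (Lemma~2.1 of~\cite{felikson2010reflection}) that the minimal number of reflections needed to generate a Coxeter group $C(\Gamma)$ equals $r(C(\Gamma))$, the number of vertices of $\Gamma$. Once this fact is in hand, the proposition should follow by a one-line transfer along the hypothesized surjection.

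In detail, I would first fix a minimal meridional generating set $m_1,\ldots,m_k$ of $\pi_1(S^3\setminus L)$, where $k=\mu(L)$. Let $\phi\colon\pi_1(S^3\setminus L)\twoheadrightarrow C(\Gamma)$ denote the surjection provided by hypothesis. Since $\phi$ is surjective and the $m_i$ generate the source, the images $\phi(m_1),\ldots,\phi(m_k)$ generate $C(\Gamma)$. By the hypothesis that every meridian is sent to a reflection, each $\phi(m_i)$ is a reflection. Hence $C(\Gamma)$ admits a generating set consisting of $k$ reflections, and the cited lemma then yields $k\geq r(C(\Gamma))$, which is the desired inequality $\mu(L)\geq r(C(\Gamma))$.

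The proof of the proposition itself therefore contains no real obstacle; its conceptual content is that any quotient sending the distinguished meridional conjugacy classes into the distinguished reflection conjugacy classes transfers the generation problem from the link group to the Coxeter group, where the minimal size of a reflection generating set is a classical invariant. The genuine difficulty is pushed elsewhere in the paper: for each link under consideration one must construct a Coxeter quotient whose reflection rank matches the Wirtinger-number upper bound on $\beta(L)$, and one must verify the global hypothesis that \emph{every} meridian, not merely a chosen Wirtinger subset, is sent to a reflection. The latter verification is automatic once a Wirtinger generating set is shown to map to reflections, since meridians form a union of conjugacy classes in $\pi_1(S^3\setminus L)$ and reflections form a conjugation-closed subset of $C(\Gamma)$; this observation is what makes the proposition a clean bridge between diagrammatic constructions and the lower bound on $\mu(L)$.
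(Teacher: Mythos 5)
Your proposal is correct and takes essentially the same approach as the paper: the paper treats the proposition as immediate from the fact (cited as Lemma~2.1 of~\cite{felikson2010reflection}) that $r(C(\Gamma))$ is the minimal number of reflections generating $C(\Gamma)$, exactly as you do by pushing a minimal meridional generating set forward to a reflection generating set. Your closing remarks about conjugation-closedness and where the real work lies are accurate but not part of the paper's (essentially one-line) argument.
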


Throughout the paper, we will consider Coxeter groups that arise as quotients of a link group by sending all meridians to reflections. We refer to such groups as Coxeter quotients of the corresponding link. They were introduced by Brunner, in the guise of Artin quotients~\cite{brunner1992geometric}; his construction was the starting point for our work. 

The easiest examples of links admitting non-trivial Coxeter quotients are torus links of type $T(2,\pm n)$, i.e. closures of the 2-braid $\sigma_1^{\pm n} \in B_2$, where $n \geq 2$ is a natural number. We claim that the fundamental group of $T(2,\pm n)$ surjects to the rank two Coxeter group $D_n$ generated by two reflections $a,b$ satisfying the relation
$$(ab)^n=1.$$
The following diagram illustrates a consistent way of mapping the meridians of the diagram associated with
 the closure of the braid $\sigma_1^n \in B_2$ to reflections of $D_n$, for $n=3$.

\begin{figure}[h]
\begin{center}
\includegraphics[scale=1.4]{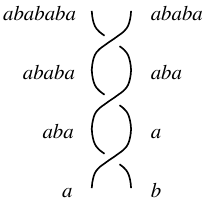}
\caption{Coloring a two-braid with reflections. \label{coloredbraid}}
\end{center}
\end{figure}

Here the orientation of the meridians does not matter since these are all mapped to reflections, which have order two. The labeling of the arcs is compatible with the Wirtinger conjugation relation at each crossing:
$$((ab)^ka)((ab)^{k-1}a)((ab)^ka)^{-1}=(ab)^{k+1}a.$$
Moreover, the relation $(ab)^n=1$ insures that the meridians at the top of the braid are mapped again to $(ab)^na=a$ and $(ab)^nb=b$, respectively. 
Proposition~\ref{lowerbound} implies that the meridional rank of two-bridge torus links is at least two, hence exactly two: $\mu(L)=\beta(L)=2$.
These examples are part of two larger families, pretzel links and two-bridge links, whose meridional rank is detected by the rank of suitable Coxeter quotients.

A \emph{twist region} is a maximal string of bigon regions in the knot projection, arranged end–to–end at their vertices. Pretzel links are defined via certain diagrams $P(a_1,a_2,\ldots,a_k)$ with $k \geq 3$ vertical twist regions. The coefficients $a_i \in \mathbb{Z}$ encode the number of crossings in each twist region, and their signs. The 
 convention can be deduced
 from Figure~\ref{pretzelknot}, which shows the pretzel knot $P(-2,3,5)$.

\begin{figure}[h]
\begin{center}
\includegraphics[scale=1.4]{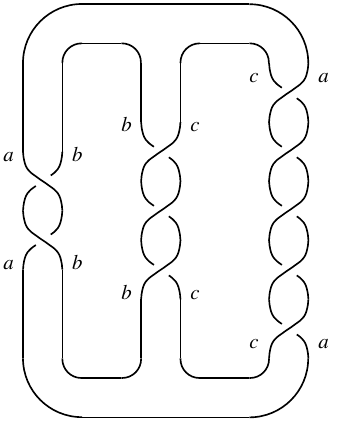}
\caption{Pretzel knot $P(-2,3,5)$. \label{pretzelknot}}
\end{center}
\end{figure}

Borrowing from the above discussion on two-bridge torus links, we deduce that pretzel links of type $P(a_1,a_2,\ldots,a_k)$ with all $|a_i| \geq 2$ admit a rank~$k$ Coxeter group quotient: $C(\Delta(a_1,a_2,\ldots,a_k))$, where $\Delta(a_1,a_2,\ldots,a_k)$ denotes a cycle with~$k$ vertices, whose edges are labeled $|a_1|,\ldots,|a_k|$, in a cyclic way. This can be seen in Figure~\ref{pretzelknot}, where a certain generating set of meridians of the pretzel knot $P(-2,3,5)$ is mapped to the generators $a,b,c$ of a rank three Coxeter group, satisfying the relations $(ab)^2=1$, $(bc)^3=1$, $(ac)^5=1$. The lower bound on the meridional rank from Proposition~\ref{lowerbound}, $\mu(L) \geq k$, matches the bridge number again. Indeed, the standard diagram of the pretzel link $P(a_1,a_2,\ldots,a_k)$ has exactly $k$ local maxima. Therefore, we have just reproved the meridional rank conjecture for pretzel links with every $|a_i|\geq 2$. The original proof by Boileau and Zieschang was based on 2-fold branched coverings~\cite{boileau1985nombre}.

We now briefly turn to two-bridge knots, to which our method also applies. By using the calculus of continued fractions, one can see that two-bridge knots are determined by a rational number $\alpha/\beta$ with relatively prime odd integers $\alpha, \beta$ such that
$-\alpha<\beta<\alpha$, see~\cite{conway1970enumeration}. The fundamental group of the knot $L(\alpha/\beta)$ admits a presentation with two generators $x,y$ and one relation of the form
$wx=yw$, where
$$w=x^*y^* \cdots x^*y^*$$
is a word of even length $\alpha-1$ and each star stands for a sign $\pm 1$, determined by the fraction $\alpha/\beta$. This is taken from~\cite{kitano2017note}. Setting $x^2=y^2=1$ reduces the relation $wx=yw$ to the Coxeter relation $(xy)^{\alpha}=1$. The special case $\alpha/\beta=n/1$ for odd $n$ corresponds to the torus knots of type $T(2,n)$ discussed previously. We conclude that all non-trivial two-bridge knots admit a Coxeter quotient of rank two. We can visualize these Coxeter quotients as a labeling of the strands of a diagram of a two-bridge knot by elements in the corresponding Coxeter group. Since every rational tangle diagram can be completed to a two-bridge knot by attaching a trivial tangle, then every rational tangle diagram has a labeling by elements of a rank two Coxeter group. Moreover, the strands of the rational tangle that are incident to the boundary of the tangle receive labels in the generating set $\{x,y\}$ and the labels of these strands are cyclically ordered around the boundary of the tangle according to the pattern $xxyy$. We call such a labeling a \emph{rank 2 (Coxeter) labeling} of a rational tangle.

It is known 
that the only links with meridional rank two are two-bridge links~\cite{boileau1989orbifold}. We do not know whether this can be seen by considering the maximal rank among all Coxeter quotients of a link. In fact, we do not even know whether all non-trivial knots admit non-cyclic Coxeter quotients.

\section{Upper bounds on bridge number}\label{upper}

Let $D$ be a diagram of a link $L$. To obtain the desired upper bound on the bridge number of $L$, we will use the Wirtinger number, $\omega(D)$, introduced in~\cite{blair2020wirtinger}. The Wirtinger number is an integer associated to a knot diagram. It can be determined via a combinatorial procedure for coloring the diagram, as recalled below. It formalizes the idea of finding the minimal number of Wirtinger generators in $D$ sufficient to generate the group $\pi_1(S^3-L)$ by only using ``iterated Wirtinger relations" in $D$.

Denote by $n$ the crossing number of $D$ and think of $D$ as the union of $n$ strands, or closed arcs in the plane. Two strands are adjacent if they are the under-strands at some crossing in $D$. Denote the set of strands in $D$ by $S(D)$. We say that $D$ is partially colored if we have fixed a function $f: S(D) \to \{0, 1\}$ such that $A:=\{s|f(s)=1\}\neq\emptyset$.  Given such a function $f$, we refer to the elements of $S(D)$ on which $f$ evaluates to 1 as the colored strands of $D$, and we refer to $A$ as a partial coloring of $D$.  Given two partial colorings $A_1$ and $A_2$ of the same diagram $D$, we say $A_2$ can be obtained from  $A_1$ via a coloring move on $D$, denoted $A_1\to A_2$, if the following conditions are satisfied:
\begin{enumerate}
    \item $A_1 \subset A_2$ and $A_2 \setminus A_1 = \{s_j\}$ for some strand $s_j\in S(D)$;
    \item $s_j$ is adjacent to $s_i$ at some crossing $c$ in $D$ with over-strand $s_k$, where $s_i, s_k\in A_1$.
\end{enumerate}
The move $A_1\to A_2$ reflects the fact that if a subgroup  $H\subset \pi_1(S^3-K, x_0)$ contains the Wirtinger generators corresponding to all strands in $A_1$, then $H$ also contains the generator corresponding to $s_j$; this is seen by applying the Wirtinger relation at $c$. 

We say $D$ is \textit{$k$-colorable} if\footnote{We have slightly simplified the original definition of $k$-colorability, which makes use of $k$ different colors. Multiple colors are needed in the proof of the Main Theorem of~\cite{blair2020wirtinger}, but they are of no help to us here. The modified definition has no effect on the value of $\omega(D)$.} there exists a subset $A_0$ of $S(D)$  
with $k$ elements and a sequence of $n - k$ coloring moves $A_0\to A_1\to\;\dots\;\to A_{n - k}$ on $D$ such that $A_{n-k}=S(D)$.  That is, after performing the sequence of coloring moves, every strand in $D$ is colored. It follows that the meridians of the strands in $A_0$ generate the link group via iterated application of the Wirtinger relations in $D$.  We refer to the elements of $A_0$ as the seed strands of the coloring sequence or, simply, the {seeds}. The smallest integer $k$ such that $D$ is $k$-colorable is the Wirtinger number of $D$, denoted $\omega(D)$. 

It is easy to come up with examples which demonstrate that $\omega(D)$ depends on the choice of diagram so is not a link invariant. In fact, the Wirtinger number can be arbitrarily large for sufficiently complicated diagrams of the unknot~\cite{blair2019incompatibility}. We are naturally more interested in minimizing the value of $\omega(D)$ over all diagrams $D$ of a given link $L$ since, by definition $$\omega(D)\geq \mu(L).$$

\begin{definition}
\label{omega}  Let $L\subset S^3$ be a link. The {\it Wirtinger number} of $L$, denoted $\omega(L)$, is the minimal value of $\omega(D)$ over all diagrams $D$ of $L$.	
\end{definition}

It is straight-forward to see that $\omega(L)$ satisfies the inequalities $$\beta(L)\geq\omega(L)\geq\mu(L).$$ 
In fact, the first inequality is never strict.

\begin{theorem}\cite[Theorem 1.3]{blair2020wirtinger}
\label{w=b} 
 Let $L\subset S^3$ be a link. The {\it Wirtinger number} and the bridge number of $L$ are equal.
 \end{theorem}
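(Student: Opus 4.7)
The noted inequality $\beta(L) \geq \omega(L)$ reduces the theorem to establishing the reverse inequality $\omega(L) \geq \beta(L)$. My plan is to start with any diagram $D$ of $L$ realizing $\omega(D) = k$ together with a choice of coloring sequence $A_0 \to A_1 \to \cdots \to A_{n-k} = S(D)$, and to construct from these data an explicit $k$-bridge presentation of $L$. The guiding idea is that the $k$ seeds will become the $k$ bridges, while the coloring moves, processed in order, encode a sequence of local isotopies that organize the remaining strands into $k$ trivial lower arcs.

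Concretely, I would begin by embedding $L$ in $S^3$ with an auxiliary $2$-sphere $S$ arranged so that each seed strand in $A_0$ is cut by $S$ into three pieces: a middle over-arc above $S$ (which becomes a bridge) and two short tails below $S$ (which become initial segments of lower arcs). All non-seed strands initially lie below $S$. The $k$ upper arcs are then manifestly unknotted, so the content of the construction is to verify that the lower arcs---each a concatenation of non-seed strands and seed-strand tails meeting at undercrossings---can be realized as trivial, boundary-parallel arcs in the lower ball.

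This verification is carried out inductively. To process the coloring move $A_i \to A_{i+1}$, where $s_j$ is colored via a crossing $c$ with over-strand $s_k$ and adjacent under-strand $s_i$ (both already positioned in their final spots relative to $S$), I would perform an ambient isotopy of $L$ supported in a small ball neighborhood of $c$ that finger-pushes the portion of $s_j$ near $c$ alongside $s_i$, routing it past the over-strand $s_k$. The Wirtinger relation at $c$ is what makes this isotopy possible: since $s_i$ is already placed and $s_k$ passes over both $s_i$ and $s_j$, there is just enough local room to slide the endpoint of $s_j$ at $c$ around $s_k$ and join it to $s_i$ without changing the link type. After processing every move, every lower arc has been straightened into an embedded, unknotted arc in the lower ball, yielding the desired $k$-bridge presentation.

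The principal obstacle is to execute this isotopy argument rigorously. One must establish: (i) each local finger-push is a genuine ambient isotopy; (ii) the lifts do not create extra upper arcs, so the bridge count never exceeds $k$; and (iii) the local isotopies associated to the successive coloring moves can be supported in pairwise compatible (nested or disjoint) neighborhoods and composed to a global isotopy. All three points rely on the combinatorial ``tree'' structure that the coloring sequence imposes on $S(D)$: every non-seed strand has a unique ancestry back to a seed through the chain of parent triples $(s_k, s_i, c)$, and executing the isotopies in the order prescribed by the coloring sequence lets their supports be arranged along this tree. Making this disjointness and compatibility precise---while also verifying that the final lower tangle is indeed trivial rather than merely unknotted arc-by-arc---is where the technical heart of the argument lies.
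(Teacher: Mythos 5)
First, a point of orientation: the paper does not prove this statement. Theorem~\ref{w=b} is imported verbatim from \cite[Theorem 1.3]{blair2020wirtinger}, so there is no internal argument to compare yours against; the relevant comparison is with the proof in that reference. Your overall strategy --- reduce to $\omega(L)\geq\beta(L)$, promote the $k$ seeds to the $k$ over-bridges, and use the coloring sequence to push every other strand below the bridge sphere --- is indeed the strategy of the cited proof, and your observation that the coloring moves give each non-seed strand a ``parent'' chain back to a seed is exactly the structure that proof exploits (it is also why the original definition uses $k$ distinct colors: to record which seed each strand descends from).

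That said, as written your argument defers the entire technical content to points (i)--(iii), and the specific worry you flag at the end --- that the lower arcs might be unknotted individually but form a nontrivial tangle collectively --- is a real obstruction to the ``sequence of local finger-push isotopies'' plan: there is no obvious reason the supports of these isotopies can be made compatible, nor that the final lower tangle is boundary-parallel. The cited proof avoids both issues by not producing a bridge sphere directly. Instead, it builds a \emph{height function}: each strand is assigned a height determined by the stage at which it is colored, with seeds on top, and the diagram is realized as a Morse embedding in which each strand is either monotone or contains a single local maximum. The coloring move guarantees that every non-seed strand is adjacent, at one of its two endpoints, to a strictly higher (earlier-colored) strand, so it never needs to contain a local maximum; hence the embedding has exactly $k$ local maxima (one per seed), with no control needed over the minima or over the lower tangle. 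One then invokes the classical fact that $\beta(L)$ equals the minimal number of local maxima over all Morse positions of $L$. If you reorganize your argument around such a height function rather than a composition of local ambient isotopies, points (i)--(iii) reduce to the single, checkable claim about where local maxima can occur, and the gap closes.
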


Therefore, given a diagram $D$ of a link $L$, we have

\begin{equation} \label{bwm}
	\omega(D)\geq\omega(L)=\beta(L)\geq\mu(L).
\end{equation}

We will prove Theorems~\ref{twisted} and~\ref{arbs} by using Coxeter quotients to show that, for links covered by these theorems, $\omega(D)$ is also a lower bound for the meridional rank. 

\section{Meridional rank conjecture for twisted links}

Let $L$ be a twisted link with diagram $D$ bounding a twisted surface $F$. In order to find the desired bounds on $\mu(L)$ and $\beta(L)$, it proves useful to retract the spanning surface $F$ to a graph as in~\cite{brunner1992geometric}. Since the boundary of a disk in $F$ contains multiple arcs in the knot diagram which represent different Wirtinger generators, it is convenient that the vertices of the resulting graphs be disks of non-zero radius, rather than points.  
We therefore work with fat-vertex graphs, which are planar graphs
 whose vertices are replaced by disjoint closed disks of small positive radius. These disks are the fat vertices. The boundary of each vertex is partitioned by the endpoints of incident edges into a finite collection of disjoint arcs. 
 A fat-vertex graph is {weighted} if an integer is assigned to each edge.

Given $L$, $D$ and $F$ as above, we obtain from $F$ a  weighted fat-vertex graph in the obvious way: view each disk of $F$ as a fat vertex and retract each twisted band of $F$ to its core edge, weighted by the number of (signed) half-twists of the band. We call this graph the fat-vertex graph associated to $D$ and, from here on, denote it by $\Gamma$. Also denote the dual weighted graph by $\Gamma^\ast$, where each edge of $\Gamma^\ast$ inherits the weight of the corresponding edge of $\Gamma$. For reasons that will become imminently apparent, we call the weighted graph $\Gamma^\ast$ the Coxeter graph associated to $D$. We suppress the choice of checkerboard coloring in this terminology and, in the case where $D$ is a twisted diagram, we are of course using the checkerboard coloring which detects this property. 

The surface $F$ is twisted if and only if all weights of $\Gamma$ are at least 2 in absolute value and  $\Gamma^\ast$ is a simple graph;  $L$ is then a twisted link. Under this assumption, Brunner~\cite{brunner1992geometric} shows that $\pi_1(S^3\backslash L)$ surjects to the Coxeter group $C(\Gamma^\ast)$ defined by the weighted graph $\Gamma^\ast$. Thereby, meridians of the boundaries of fat vertices are mapped to a generating set of reflections in $C(\Gamma^\ast)$. 
 Applying Proposition~\ref{lowerbound}, we conclude that the meridional rank of $L$ is bounded below by the rank of the Coxeter group determined by the graph $\Gamma^\ast$. 
This proves:
  
  \begin{lemma}\label{lower-bound}
  	Let $L$ be a twisted link with associated Coxeter graph $\Gamma^\ast$. The merdional rank of $L$ is bounded below by the number of vertices in $\Gamma^\ast$. 
  \end{lemma}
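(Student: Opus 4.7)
The plan is to apply Proposition~\ref{lowerbound} with the Coxeter group $C(\Gamma^\ast)$, so what needs to be supplied is the surjection $\pi_1(S^3\setminus L)\twoheadrightarrow C(\Gamma^\ast)$ sending meridians to reflections. This is essentially Brunner's construction; I would reproduce it in the fat‑vertex picture in order to see transparently where each hypothesis in the definition of ``twisted'' is used.

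First I would label the diagram by generators of $C(\Gamma^\ast)$. Assign to every fat vertex $v$ of $\Gamma$ the Coxeter generator $s_v$ corresponding to the dual vertex in $\Gamma^\ast$. Each Wirtinger arc of $D$ lies on the boundary of a unique disk component of $F$, say the disk at vertex $v$, and sits between two consecutive attaching regions of bands. I would label such an arc by a conjugate of $s_v$, chosen so that the labels on the two arcs entering and leaving a twisted band at a crossing obey the braid coloring rule illustrated in Figure~\ref{coloredbraid}: across a band with weight $k$ between $v$ and $w$, the alternating labels are $(s_vs_w)^i s_v$ and $(s_vs_w)^i s_w$ as $i$ ranges over $0,\ldots,k-1$. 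By construction the Wirtinger relation at every crossing inside a band is then automatically satisfied.

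Second I would verify consistency around each disk. Going once around the boundary of the fat vertex $v$, one passes through all its incident bands in their cyclic planar order; the total conjugation picked up must be trivial so that the labeling closes up. This is where the twisted hypothesis is used: the assumption that every band carries at least a full twist means each edge weight of $\Gamma^\ast$ is at least two, so $(s_vs_w)^{k_{vw}}=1$ is a genuine Coxeter relation and the two arcs of the disk adjacent to this band are labeled by the same element $s_v$; simplicity of $\Gamma^\ast$ ensures that two distinct bands at $v$ correspond to two distinct Coxeter generators, so that no unintended identifications collapse the rank. Together these guarantee a well‑defined homomorphism $\varphi\colon\pi_1(S^3\setminus L)\to C(\Gamma^\ast)$, and since every label is a conjugate of some $s_v$, meridians are sent to reflections. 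The image contains all $s_v$ (they already appear as labels), so $\varphi$ is surjective. Proposition~\ref{lowerbound} then yields $\mu(L)\geq r(C(\Gamma^\ast))=|V(\Gamma^\ast)|$.

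The main obstacle is the consistency check around each fat vertex: one needs to see that the cumulative conjugation accrued by walking around the disk returns the original label. This is the content of Brunner's calculation, and it is precisely the place where both the full‑twist condition and the simplicity of $\Gamma^\ast$ are genuinely needed; everything else in the argument is a formal application of Proposition~\ref{lowerbound} and of the fact, already noted in Section~\ref{lower}, that the rank of $C(\Gamma^\ast)$ equals the number of vertices of $\Gamma^\ast$.
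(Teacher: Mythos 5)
Your overall strategy — build Brunner's surjection $\pi_1(S^3\setminus L)\twoheadrightarrow C(\Gamma^\ast)$ sending meridians to reflections and then invoke Proposition~\ref{lowerbound} together with $r(C(\Gamma^\ast))=|V(\Gamma^\ast)|$ — is exactly the paper's (which simply cites Brunner for the surjection). But the construction you supply is not Brunner's, and it does not produce $C(\Gamma^\ast)$. The generators of $C(\Gamma^\ast)$ are indexed by the \emph{vertices of $\Gamma^\ast$}, i.e.\ by the complementary planar regions (the faces of $\Gamma$), not by the fat vertices of $\Gamma$. There is no ``dual vertex in $\Gamma^\ast$ corresponding to a fat vertex $v$'': duality exchanges vertices with faces, and the counts do not even agree. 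In the correct labeling, an arc of $D$ is labeled by (a conjugate of) the generator of the white region it borders; in particular the arcs on the boundary of a single disk of $F$ carry \emph{different} generators, one for each region between consecutive attached bands, and the two generators entering the dihedral relation of a band of weight $k$ are those of the two regions flanking that band.

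Your scheme fails concretely on the pretzel $P(-2,3,5)$ of Figure~\ref{pretzelknot}: there $\Gamma$ has two fat vertices (top and bottom disk) joined by three bands, so you would have only two generators $s_{\mathrm{top}},s_{\mathrm{bot}}$, and consistency along the three bands would force $(s_{\mathrm{top}}s_{\mathrm{bot}})^2=(s_{\mathrm{top}}s_{\mathrm{bot}})^3=(s_{\mathrm{top}}s_{\mathrm{bot}})^5=1$, collapsing the quotient to $\mathbb{Z}/2\mathbb{Z}$ — whereas the lemma needs a rank-$3$ quotient, realized in the figure by the labels $a,b,c$ attached to the three complementary regions with relations $(ab)^2=(bc)^3=(ac)^5=1$. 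Relatedly, your use of the hypotheses is slightly off: simplicity of $\Gamma^\ast$ is needed so that two regions are separated by at most one band, making the relation $(st)^{k}=1$ between their generators unambiguous, and the full-twist condition ($|k|\geq 2$) is what makes each such relation a legitimate Coxeter relation without collapsing generators. With the generators reassigned to the regions as above, the rest of your argument (the dihedral coloring of each twist region as in Figure~\ref{coloredbraid}, surjectivity, and the application of Proposition~\ref{lowerbound}) goes through and recovers the paper's proof.
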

  
The next proposition, established later in this section, allows us to prove Theorem~\ref{twisted}.
  
  \begin{proposition}\label{upper-bound}
  	Let $L$ be a twisted link with associated Coxeter graph $\Gamma^\ast$. The bridge number of $L$ is bounded above by the number of vertices in $\Gamma^\ast$.  
  \end{proposition}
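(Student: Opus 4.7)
The plan is to invoke Theorem~\ref{w=b}, which gives $\beta(L)=\omega(L)\leq\omega(D)$, so the proposition reduces to showing $\omega(D)\leq N$, where $N=|V(\Gamma^\ast)|$ is the number of planar regions of $\Gamma$. This will be carried out by constructing a valid Wirtinger $N$-coloring of $D$, with seeds placed one per region of $\Gamma$.

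The first step is to select seeds. For each region $R$ of $\Gamma$, the seed is an arc of $L$ on the boundary of some fat vertex $v$ of $\Gamma$, lying in the sector of $v$ associated with $R$ (such an arc exists because every region is adjacent to at least one fat vertex). However, the assignment of which vertex to use for each region must be coordinated rather than arbitrary; small toy cases such as $\Gamma=K_4$ show that a haphazard one-per-region placement can leave every vertex with only a single colored sector, so that no propagation step can even be initiated. A natural coordinated choice uses a spanning tree of $\Gamma^\ast$, or alternatively concentrates as many seeds as possible at a single high-degree fat vertex and supplements with one further seed per region not adjacent to that vertex.

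The heart of the proof is a twist-region propagation lemma: if both arcs of $L$ on $\partial v$ flanking a band $B$ at one endpoint $v$ of $B$ are colored, then iterated Wirtinger moves along the crossings of $B$'s twist region, ordered outward from $v$, color every strand of $B$, and in particular color the two arcs on $\partial v'$ flanking $B$ at the opposite endpoint $v'$. The argument is direct: the two flanking arcs at $v$ are exactly the two rails entering the twist region, so at the first crossing they supply a colored over-strand and a colored under-strand piece, and the Wirtinger relation extends the coloring to the new strand cut there. The over/under alternation inside a twist region of weight at least $2$ then guarantees that at each subsequent crossing the over-strand (created at the previous step) and one piece of the under-strand (colored two steps back) are both already colored, so the Wirtinger move applies repeatedly until the far end of $B$ is reached.

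Granted this lemma, the remaining task, and the main obstacle, is to show that under a suitable seed assignment, iterated propagation eventually colors every sector on every disk boundary. The plan is a bootstrap procedure: the simplicity of $\Gamma^\ast$ (no multiple edges between regions, a direct consequence of the twistedness of $F$) forbids parallel bands between the same pair of regions and hence controls how bands interact, and processing bands in an order informed by the spanning-tree structure on $\Gamma^\ast$ (equivalently, by a breadth-first traversal over fat vertices) should guarantee that at each stage some band has both its flanking sectors colored at a common endpoint, allowing the twist-region lemma to extend the coloring to further sectors. Once every arc on every disk boundary is colored, one final application of the lemma fills in every twist region, yielding $\omega(D)\leq N$ and completing the proof.
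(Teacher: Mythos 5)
Your reduction to $\omega(D)\leq N$ via Theorem~\ref{w=b} is exactly the paper's strategy, and your ``twist-region propagation lemma'' is precisely the pair of coloring moves the paper defines on the fat-vertex graph $\Gamma$ (an edge becomes colorable once both boundary arcs flanking it at one endpoint are colored; a boundary arc becomes colorable once it is incident to a colored edge). You have also correctly located the crux: a one-seed-per-region placement must be coordinated, since a careless choice can stall with no band having both flanking arcs colored at a common endpoint.

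The genuine gap is that you never actually resolve that crux. The paragraph beginning ``the remaining task, and the main obstacle'' is a plan, not an argument: ``processing bands in an order informed by the spanning-tree structure on $\Gamma^\ast$ \dots should guarantee that at each stage some band has both its flanking sectors colored'' is exactly the statement that needs proof, and it is where all the work lies. The paper proves it as Lemma~\ref{v*>w}, by an induction that has no analogue in your sketch: first one reduces (using reducedness of $D$ and additivity of bridge number and Coxeter rank under connected sum) to fat-vertex graphs with no leaves, no cut vertices and no cut edges; then one shows, via Lemma~\ref{theta} (every such graph is a cycle or contains a theta subgraph), that every such graph is built from a single fat vertex by three elementary operations (add a loop, subdivide an edge, add an edge); finally one shows how each operation extends an existing coloring sequence, inserting the new segments and, when a new bounded region is created, exactly one new seed at a position where the propagation remains valid. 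It is this last step that controls seed placement and propagation order simultaneously, and nothing in your spanning-tree or high-degree-vertex heuristic substitutes for it. Note also that the simplicity of $\Gamma^\ast$, which you invoke to ``control how bands interact,'' plays no role in the upper bound at all --- it is needed only for the lower bound via the Coxeter quotient --- so leaning on it here is a sign that the intended mechanism of the global argument has not been identified. To complete your proof you would need to formulate and prove the analogue of Lemma~\ref{v*>w}.
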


\begin{proof}[Proof of Theorem~\ref{twisted}] 
	Let $L$ be a twisted link and let $\Gamma^\ast$ be the Coxeter graph associated to a twisted diagram of $L$. Denote by $v$ the number of vertices of $\Gamma^\ast$. Combining Lemma~\ref{lower-bound} and Proposition~\ref{upper-bound}, we obtain $$v\geq \beta(L)\geq \mu(L)\geq v.$$
	That is, the meridional rank conjecture holds for twisted link and the bridge number of $L$ is equal to the number of vertices in $\Gamma^\ast$ or, equivalently, to the number of regions in the planar complement of a twisted surface for $L$. 
\end{proof}

In light of Theorem~\ref{twisted}, it is natural to ask which knots admit twisted diagrams. Prime twisted knots with at least 2 twist regions and at least 7 half-twists per region are hypebolic~\cite{futer2008dehn}. By results of Lackenby~\cite{L04}, the volume of such a knot is also bounded above by a constant of the bridge number. Hence, hyperbolic knots with high volume yet small bridge number are not covered by our theorem. However, our methods do allow us to establish the meridional rank conjecture for certain hyperbolic knots of fixed bridge number and arbitrarily high volume, e.g. two-bridge knots, as mentioned in the last paragraph of Section~\ref{lower}. More generally, Theorem~\ref{twisted} extends to a large class of links obtained from twisted links by replacing twist regions by rational tangles. For example, in Figure~\ref{pretzelknot} we can replace the last twist region by a rational tangle, say the one found at the very right of Figure~\ref{arborescentlink}, retaining the $a, c$ labels. This would preserve both the upper and lower bounds we found. The analogous construction can be performed in many situations, extending the proof of the meridional rank conjecture to the resulting knots. However, in general, replacing a twist region by a rational tangle does not preserve the Wirtinger number of a diagram.

\subsection{Proof of Proposition~\ref{upper-bound}}\label{sec-prop1}

For the remainder of this section, assume that $L$ is a twisted link with twisted diagram $D$. Denote the fat-vertex graph  and Coxeter graph associated to $D$ by $\Gamma$ and $\Gamma^\ast$, respectively. We will obtain the desired bound on $\beta(L)$ by the technique recalled in Section~\ref{upper}. It will be useful to be able to perform coloring moves not only on $D$ but directly on $\Gamma$.

\begin{definition}
	Let $\Gamma$ be a fat-vertex graph and denote by $E$ the set of edges of $\Gamma$. A {\it segment} of $\Gamma$ is either an element of $E$ or a connected arc contained in $\partial(v)\backslash\{\partial e| e\in E\}$, where $v$ is a fat vertex. 
\end{definition}

Denote by $S$ the set of segments of a fat-vertex graph $\Gamma$. We say that  $\Gamma$ is {\it partially colored} if we have fixed a function $f: S \to \{0, 1\}$ such that $A:=\{s|f(s)=1\}\neq\emptyset$. As before, refer to $A$ as a {\it partial coloring} of $\Gamma$ and to the elements of $A$ as the {\it colored segments}. Given two subsets $A_1\subset A_2\subset S$ with $A_2\backslash A_1 =\{s\}$ a single segment, we allow a {\it coloring move} $A_1\to A_2$ if one of the following holds:

\begin{enumerate}
	\item $s$ is an edge of $\Gamma$ and both segments adjacent to the same vertex of $s$ are in $A_1$.
	\item $s$ is an arc in the boundary of a fat vertex of  $\Gamma$ and $s$ is incident to an edge in~$A_1$.
\end{enumerate}

Case (1) in which a coloring move is allowed on $\Gamma$ is motivated by the following observation. Let $\Gamma$ be a fat-vertex graph obtained from a link diagram and spanning surface. An edge $e$ of $\Gamma$ denotes a twist region in the link diagram. The meridians of the two arcs incident to the same vertex of $e$ generate all meridians of strands contained in the corresponding twist region, via iterated Wirtinger relations, compare Figure~\ref{coloredbraid}. Case (2) is motivated by the fact that the meridians of arcs in a twist region generate the meridians of arcs incident to a twist region.  

Given a fat-vertex graph $\Gamma$, denote the set of its segments by $S(\Gamma)$, the set of its edges by $e(\Gamma)$ and the number of elements in $S(\Gamma)$ by $m$. We say $\Gamma$ is $k$-colorable if there exists a $k$-element subset $A_0$ of $S(\Gamma)\backslash e(\Gamma)$ and a sequence of $m - k$ coloring moves $A_0\to A_1\to\;\dots\;\to A_{m - k}$ on $\Gamma$ as defined above such that $A_{m- k}=S(D)$, that is, at the end of the coloring process every segment of $\Gamma$ is colored.  We refer to the elements of $A_0$ as the \textit{seed segments} or {\it seeds}. When  $\Gamma$ is the fat-vertex graph associated to a link diagram $D$, the seed segments correspond to meridional elements of $L$ that generate the group $\pi_1(S^3\backslash L)$ via iterated application of the Wirtinger relations in $D$. The minimum value of $k$ such that $\Gamma$ is $k$-colorable is the {\it Wirtinger number} of $\Gamma$, denoted $\omega(\Gamma)$. The following is immediate.

\begin{lemma} \label{w(g)}
	Let $D$ be a link diagram and $\Gamma$ its  associated fat-vertex graph. The inequality $\omega(\Gamma)\geq \omega(D)$ holds.
\end{lemma}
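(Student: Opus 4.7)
The plan is to replay any valid $k$-coloring sequence of $\Gamma$, one move at a time, as a valid Wirtinger coloring sequence of $D$ using $k$ seed strands; this directly gives $\omega(D)\le \omega(\Gamma)$. To set up the translation, note that each boundary arc $s$ of a fat vertex of $\Gamma$ is a connected piece of $L$ lying on a disk of $F$ between two twist-band attachments; it contains no undercrossings, so it is contained in a unique Wirtinger strand of $D$, and we assign $s$ to that strand. Given a $k$-coloring $A_0\to A_1\to\cdots\to A_{m-k}=S(\Gamma)$ of $\Gamma$ with $A_0\subset S(\Gamma)\setminus e(\Gamma)$, take as seeds of $D$ the $k$ strands associated with $A_0$.

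Next I translate each fat-vertex coloring move into a (possibly empty) block of Wirtinger moves on $D$. A Case~(1) move colors an edge $e$ when the two boundary arcs incident to one endpoint of $e$ are already colored; these correspond to the two strands of $L$ entering the twist region represented by $e$. Applying the Wirtinger relation at the first crossing of that twist region colors the newly created under-strand, exactly as in Figure~\ref{coloredbraid}; iterating at the successive crossings propagates the coloring through every strand inside the region, including the two strands exiting on the opposite side. A Case~(2) move colors a boundary arc $s$ incident to a colored edge $e$; but the strand containing $s$ is an entering or exiting strand of the twist region represented by $e$, and it was already colored when $e$ was colored, so no new Wirtinger step is needed.

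Once every segment of $\Gamma$ has been colored, every disk-boundary strand and every strand inside every twist region of $D$ is colored, i.e.\ all of $S(D)$. Hence $D$ is $k$-colorable and $\omega(D)\le k=\omega(\Gamma)$. The one step to verify carefully is the identification of the ``two boundary arcs at one endpoint of $e$'' with the two strands of $D$ at the corresponding end of the twist region, which is built into the retraction of $F$ onto $\Gamma$; beyond that the argument is just bookkeeping on top of the two-braid Wirtinger computation already displayed in Figure~\ref{coloredbraid}.
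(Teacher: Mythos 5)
Your proposal is correct and is exactly the argument the paper has in mind: the paper declares the lemma ``immediate,'' relying on the two observations stated right after the definition of coloring moves on $\Gamma$ (that a Case~(1) move corresponds to propagating Wirtinger relations through a twist region as in Figure~\ref{coloredbraid}, and that a Case~(2) move costs nothing because the incident strands are already colored). You have simply written out the bookkeeping the paper leaves implicit.
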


To complete the proof of Proposition~\ref{upper-bound}, we need one last ingredient, namely that a fat-vertex graph $\Gamma$ can be colored starting from as many seed segments as the number of vertices in $\Gamma^\ast$.  

\begin{lemma}\label{v*>w}
	Let $\Gamma$ be a connected fat-vertex graph associated to a reduced link diagram $D$. The Wirtinger number of $\Gamma$ is bounded above by the number of vertices in the dual graph~$\Gamma^\ast$.
\end{lemma}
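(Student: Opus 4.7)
The plan is to construct, for each $\Gamma$ satisfying the hypothesis, an explicit set of $|V(\Gamma^\ast)|=f$ arcs from which the coloring moves (1) and (2) reach every segment. I would argue by induction on the number of edges of $\Gamma$. The reducedness of $D$ plays an essential role at the outset: a vertex of degree one in $\Gamma$ would correspond to a region of $D$ meeting only a single crossing, which would force that crossing to be nugatory, contradicting the absence of Reidemeister~I reducing moves. Hence $\Gamma$ has no leaves, and every non-trivial $\Gamma$ contains a cycle with $f\ge 2$.

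For base cases I would treat separately $\Gamma$ a single vertex (trivial) and $\Gamma$ a simple cycle ($f=2$): placing two seed arcs at a single vertex triggers a cascade that propagates around the cycle via alternating applications of rules (1) and (2), coloring every arc and edge.

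For the inductive step I would select an edge $e$ of $\Gamma$ whose dual in $\Gamma^\ast$ is not a loop (so $e$ separates two distinct regions) and whose removal leaves a connected graph still free of degree-one vertices. If $\Gamma$ is not itself a cycle such an edge always exists; in the remaining cases one can smooth away degree-two vertices beforehand, an operation that preserves $f$ and is easily inverted at the end. With $\Gamma'=\Gamma-e$, one has $|V((\Gamma')^\ast)|=f-1$, so the induction hypothesis yields a seed set $S'\subset S(\Gamma')$ of size $f-1$ coloring all of $\Gamma'$.

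Finally, I would lift $S'$ to a seed set $S$ for $\Gamma$ as follows. At each endpoint $v_i$ of $e$, the two arcs $\alpha_i^-,\alpha_i^+$ of $\Gamma$ flanking $e$ are merged into a single arc $\alpha_i$ in $\Gamma'$. Form $S$ by replacing every merged arc occurring in $S'$ by one of its two sub-arcs in $\Gamma$, and by adding one additional seed on the complementary sub-arc at one endpoint, say $\alpha_1^+$, so that $|S|=f$. A step-by-step simulation then shows that at the moment $\alpha_1$ is first colored in the $\Gamma'$-process, both $\alpha_1^-$ and $\alpha_1^+$ are already colored in the $\Gamma$-process; rule~(1) at $v_1$ then colors $e$, after which rule~(2) at $v_2$ colors $\alpha_2^-$ and $\alpha_2^+$, and the remainder of the $\Gamma$-coloring proceeds in lockstep with that of $\Gamma'$. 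The main obstacle is the careful bookkeeping of sub-arcs $\alpha_i^\pm$ in this simulation, together with the preliminary smoothing that guarantees the availability of a safe removable edge $e$.
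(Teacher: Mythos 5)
Your overall strategy is the paper's argument run in reverse: the paper builds every admissible fat-vertex graph up from a single vertex by three operations (adding a self-loop, subdividing an edge, adding an edge between existing vertices), and extends a coloring sequence by one seed at each face-creating step, with the same Euler-characteristic bookkeeping you use. A deletion induction is fine in principle, but the two points you defer to ``careful bookkeeping'' are exactly where the paper has to work, and as written there are genuine gaps. The first is the existence of a safe removable edge. You assert it, but it does not follow from your invariant (connected, no degree-one vertices). The paper carries the stronger invariant that the graph has no disconnecting edges or vertices (the vertex condition is arranged by splitting off connect summands), proves a separate lemma that any such graph which is not a cycle contains an embedded theta subgraph, deletes an entire branch of the theta together with its chain of degree-two vertices, and then verifies that no new disconnecting edge is created by rerouting paths through the other two branches. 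Your procedure does not preserve bridgelessness, and it never treats self-loops at all --- these genuinely occur (a band attached to a single disk, or created by your smoothing of degree-two vertices), their dual edges are not loops in $\Gamma^\ast$ so your selection criterion does not exclude them, and a loop has only one endpoint, so your final lifting step with two vertices $v_1,v_2$ and four flanking arcs $\alpha_i^{\pm}$ does not apply to it. Concretely, starting from a bridgeless graph and deleting edges in an unlucky order one can reach a graph in which every non-bridge edge is a self-loop at a degree-three vertex, whose removal creates a leaf; the paper avoids this by always removing loops first and checking membership in its class $\mathfrak{G}$ after each step.

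The second gap is in the lockstep simulation. The extra seed must be placed at the endpoint of $e$ whose merged arc is colored \emph{earlier} in the $\Gamma'$-sequence, and the segments $e$, $\alpha_2^-$, $\alpha_2^+$ must then be inserted into the sequence at that early position rather than where $\alpha_2$ was originally colored. If $\alpha_2$ is colored before $\alpha_1$ in the $\Gamma'$-process and is used there, via rule (1), to color another edge flanked by $\alpha_2$ at $v_2$, your simulation stalls: only one of $\alpha_2^{\pm}$ is available before $e$ is colored. Likewise, when a merged arc in $S'$ is replaced by ``one of its two sub-arcs,'' the choice is forced (it must be the sub-arc incident to the edge that will later be colored from it). The paper's Cases B1 and B2 make exactly these choices and the accompanying reordering explicit, and that is the content of the proof rather than routine bookkeeping.
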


But first: 

\begin{lemma}\label{theta}
	 Let $\Gamma$ be a connected finite plane graph with no loops, no separating vertices and no separating edges. Then either $\Gamma$ is a cycle or it contains a subgraph $T$ (not necessarily an induced one) that is homeomorphic to the theta graph, that is, the graph with two vertices connected by three parallel edges.
\end{lemma}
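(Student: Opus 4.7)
The plan is to use the hypotheses directly: absence of separating edges forces every vertex to have degree at least two, and the assumption that $\Gamma$ is not a cycle then produces a vertex of degree at least three. Around such a vertex I will build a theta subgraph as the union of a cycle and a suitable extra path. Planarity will play no essential role; only the combinatorial hypotheses are needed.

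First I would observe that because $\Gamma$ has no separating edge, every edge lies on a cycle, and hence every vertex has degree at least $2$. If every vertex had degree exactly $2$, the connected graph $\Gamma$ would be a single cycle, giving the first alternative of the lemma. So I may assume that some vertex $v$ has degree at least $3$.

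Next, pick any edge $e$ incident to $v$; since $e$ is not a bridge it lies on a cycle $C$. Exactly two edges of $\Gamma$ at $v$ belong to $C$, so $v$ is incident to a third edge $e' = vw$ not in $C$. If $w \in V(C)$, then $C \cup \{e'\}$ is already the desired theta subgraph: the two arcs of $C$ joining $v$ to $w$, together with $e'$, form three internally disjoint $v$-$w$ paths.

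The remaining case is $w \notin V(C)$. Because $v$ is not a separating vertex, $\Gamma - v$ is connected, so there is a path in $\Gamma - v$ from $w$ to $V(C) \smallsetminus \{v\}$; choose such a path $P$ of minimal length, so that $P$ meets $C$ only at its final endpoint $u \ne v$. Concatenating $e'$ with $P$ gives a $v$-$u$ path $Q$ that is internally disjoint from $C$. Together with the two arcs of $C$ between $v$ and $u$, the path $Q$ provides three internally disjoint $v$-$u$ paths, yielding the theta subgraph $T$. The only delicate point is the internal disjointness of the third path from $C$, which is exactly what minimality of $P$ inside $\Gamma - v$ ensures.
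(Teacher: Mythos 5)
Your proof is correct, but it follows a genuinely different route from the paper's. The paper stays inside planar topology: it forms the dual graph $\Gamma^\ast$ with the unbounded-face vertex omitted, notes that the hypotheses force $\Gamma^\ast$ to be connected, concludes that $\Gamma$ is a cycle when $\Gamma^\ast$ is a single point, and otherwise finds two adjacent bounded regions whose shared boundary edges yield an embedded theta graph. You instead give a purely combinatorial argument: no separating edges forces minimum degree two, so a non-cycle has a vertex $v$ of degree at least three; you then take a cycle $C$ through $v$ via a non-bridge edge and attach an ``ear'' through a third edge at $v$, using the absence of cut vertices and a minimal-length path in $\Gamma - v$ to guarantee the ear meets $C$ only at its endpoints. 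This is essentially the first step of the ear decomposition of a $2$-connected (multi)graph. Each step checks out, including the internal-disjointness of the three $v$--$u$ paths and the degenerate subcase where the third edge lands back on $V(C)$. What your approach buys is generality and rigor: planarity plays no role, and the one delicate point (disjointness of the ear from $C$) is handled explicitly, whereas the paper's closing assertion that the edges adjacent to two neighbouring regions ``contain an embedded theta graph'' is left to the reader. What the paper's approach buys is brevity and consonance with the surrounding argument, where the dual graph $\Gamma^\ast$ is already the central object. The only (shared, and harmless) omission is the degenerate case of a graph with no edges, which does not occur in the application.
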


\begin{proof}
	This time we denote by $\Gamma^\ast$ the plane dual graph of $\Gamma$ where we omit the vertex corresponding to the unbounded region. The assumptions on $\Gamma$ imply that $\Gamma^\ast$ is connected. If $\Gamma^\ast$ is a single point, the absence of separating vertices implies that $\Gamma$ is a cycle. When $\Gamma^\ast$ has at least two vertices, $\Gamma$ contains two adjacent plane regions sharing one or several edges. The union of all the edges adjacent to these two regions contains an embedded theta graph.
\end{proof}

\begin{proof}[Proof of Lemma~\ref{v*>w}]

Since $D$ is reduced, it contains no nugatory crossings. Therefore, the graph $\Gamma$ has no leaves and no disconnecting edges. Indeed, a leaf in a fat-vertex graph corresponds to a region in a link diagram which can be removed by a sequence of Reidemeister I moves. Similarly, disconnecting vertices and edges correspond to nugatory crossings and connected sums. Nugatory crossings are not allowed in reduced diagrams. Secondly, it is enough to consider twisted diagrams of links whose components are prime, since both bridge number and Coxeter rank satisfy suitable additivity properties.
Given a connected sum of links $L=L_1\#L_2$, the inequality $$\beta(L_1)+\beta(L_2)-1\geq \beta(L)$$ is immediate. It is in fact equality~\cite{Schu54, doll1992generalized}, though we do not rely on this result. In addition, if the graph $\Gamma$ is obtained by identifying a vertex in $\Gamma_1$ with one in $\Gamma_2$, a vertex count gives the following relation among the ranks of the corresponding Coxeter groups: $$r(C(\Gamma))=r(C(\Gamma_1))+r(C(\Gamma_2))-1.$$  Therefore, if we denote the twisted links determined by these graphs by $L:=L(\Gamma)$, $L_1:=L(\Gamma_1)$ and $L_2:=L(\Gamma_2)$, by Proposition~\ref{lowerbound}, these links satisfy $$\mu(L)\geq \mu(L_1))+ \mu(L_2)-1.$$
Hence,  if the equality $\beta =\mu$ holds for each of $L_1$ and $L_2$, it also holds for the connected sum $L_1\#L_2$, seen as follows: $$\beta(L_1)+\beta(L_2)-1\geq \beta(L)\geq \mu(L)\geq \mu(L_1)+\mu(L_2)-1 =\beta(L_1)+\beta(L_2)-1.$$ We will thus assume that $\Gamma$ has no disconnecting vertices.

 In sum, we may prove the Lemma by an induction argument on connected fat-vertex graphs which are 1-connected, 1-edge-connected and have no vertices of valency one. Denote the set of such graphs by $\mathfrak{G}$. We will show that any element of $\mathfrak{G}$ can be obtained from the graph $G_0$ containing a single fat-vertex and no edges by a finite sequence of the following operations:

\begin{enumerate}
	\item  adding a self-loop to an existing vertex; 
	\item  subdividing an edge into 2 edges;
	\item adding an edge between two existing vertices.
\end{enumerate}

To verify that these operations suffice to construct all elements of $\mathfrak{G}$ from $G_0$, define the complexity of a graph $G\in\mathfrak{G}$ to be the integer $|v(G)| + |e(G)|$, the total number of edges and fat-vertices in $G$. Note that each of the operations (1)-(3) increases this complexity. To see that any $G\in\mathfrak{G}$ can be constructed from $G_0$ by a finite sequence of these operations, we show that given $G\in\mathfrak{G}, G\neq G_0$, one can undo one of the operations  (1)-(3) and remain within $\mathfrak{G}$. 

If $G\in\mathfrak{G}$ with $|v(G)| + |e(G)|>1$ has a loop $l$, we undo operation (1) on this loop. The resulting graph, $G'$, is connected. To see that $G'$ has no leaves, we only need to consider the vertex $v$ incident to the loop $l$, since no other vertex changes valency due to the removal of $l$. Suppose that $v$ is a leaf in $G'$. This implies that $v$ had valency three in $G$ and was incident only to $l$ and one other edge $e$. It follows that $e$ was a disconnecting edge in $G$, a contradiction. Note also that removing a self-loop can not create disconnecting edges or vertices. Therefore, $G'\in\mathfrak{G}$. 

Now suppose that $G$ does not have a loop but has a degree-2 vertex $v$, incident to edges $e_1$ and $e_2$. In this case, undo operation (2) at $v$, producing a new edge $e$. The resulting graph, $G'$, is connected. It has no leaves or disconnecting vertices since $G$ had none. If $e$ is a disconnecting edge in $G'$, then $e_1$ was one in $G$. If removing some other edge in $G'$ would disconnect the graph, then removing the same edge would disconnect $G$. Therefore, $G'\in\mathfrak{G}$. 

Finally, assume that $G$ contains neither a loop nor a degree-two vertex. Since $G\in\mathfrak{G}$ and has no loops, it satisfies the hypotheses of Lemma~\ref{theta}. Since $G$ has no degree-two vertex, it is not a cycle. Therefore, $G$ contains an embedded theta graph $\theta$ consisting of three paths $\gamma_1,\gamma_2,\gamma_3$ connecting two vertices $v,w$ of $G$. Choose three edges $e_1,e_2,e_3$ of $G$ with $e_i \subset \gamma_i$. Let $G'$ be the graph obtained from $G$ by removing the edge $e_1$ and all edges connected to $e_1$ by a chain of vertices of valency two, thereby undoing operation (3), after possibly undoing multiple operations of type (2). It is clear that $G'$ is connected and has no leaves. Furthermore, if some vertex in $G'$ is disconnecting, the same vertex is seen to be disconnecting in $G$.  What requires a check is that $G'$ has no disconnecting edges. Assume there is an edge $e$ in $G'$ such that  $G'\backslash e$ is disconnected. Let $a$ and $b$ be two vertices in different components of $G'\backslash e$.  By assumption, $e$ is not a disconnecting edge in $G$, so there is a path $\gamma$ in $G$ such that $\gamma$ connects $a$ to $b$ and does not contain $e$. If $\gamma$ does not meet the chain of edges $G \setminus G'$, then $\gamma$ is also a path in $G'$, contradiction. If $\gamma$ meets that chain, then we may replace each connected component of $\gamma \cap (G \setminus G')$ by a point (if it does not traverse the entire chain) or by an arc in the subgraph $\theta$ passing through $\gamma_2$ or $\gamma_3$, avoiding the chain $G \setminus G'$. We thus obtain a path in $G'$ connecting $a$ to $b$, contradiction.  This shows that $G'\in\mathfrak{G}$.

Now let $\Gamma\in\mathfrak{G}$ be as in the statement of the Lemma and denote by $k$ the number of vertices in $\Gamma^\ast$. We will show inductively that $\Gamma$ is $k$-colorable. 

Let $\Gamma=G_0$, the graph consisting of a single fat-vertex. In this case,  $\Gamma^\ast$ has one vertex, so $k=1$. It is clear that $\Gamma$ is 1-colorable: choose the only segment of $\Gamma$ as the seed. 

Now assume  $\Gamma_1\in\mathfrak{G}$ is $k_1$-colorable, where $k_1$ is the number of vertices in $\Gamma_1^\ast$, and let  $\Gamma\in \mathfrak{G}$ be obtained from $\Gamma_1\in\mathfrak{G}$ by one of the operations (1)-(3).  We will show that $\Gamma$ is $k$-colorable, where $k$ denotes the number of vertices in $\Gamma^\ast$. 

By assumption, there exists a coloring sequence $A_0\to A_1\to \dots\to A_n$ for $\Gamma_1$, where each $A_i$ is a set of segments in $\Gamma'$. Moreover, $A_0$ has $k_1$ elements, and $A_{i+1}\backslash A_i$ contains exactly one element. Order the set of segments $S(\Gamma_1)$ as $s_1, s_2, \dots, s_{k_1}, s_{k_1+1}, \dots s_{k_1+n}$, where $\{s_1, \dots, s_{k_1}\}$ are the elements of $A_0$, taken in any order, and for $m>k_1$, $s_m$ is the segment colored when the $(m-k_1)$-th coloring move is performed. At the risk of minor ambiguity, we will call this sequence of segments a coloring sequence for $\Gamma_1$ as well, and we will use it to produce the desired coloring sequence for $\Gamma$. 

{\it Case A.} Suppose that $\Gamma$ is obtained from $\Gamma_1$ by subdividing an edge $e\in S(\Gamma_1)$ into two edges $e_1, e_2\in S(\Gamma)$, both incident to a degree-two fat-vertex $v$ in $\Gamma$. By construction, the boundary of $v$ contains two segments; denote them $a$ and $b$. We see that $$S(\Gamma)=\{S(\Gamma_1)\backslash\{e\} \}\cup\{e_1, e_2, a, b\}.$$

We will produce a coloring sequence for $\Gamma$ from the given coloring sequence $s_1,  \dots, s_{k_1},$ $\dots, s_{k_1+n}$ for $\Gamma_1$. Since $\{s_1, \dots, s_{k_1}\}$ are seed segments and $e$ is an edge, $e$ appears after $s_{k_1}$ in the sequence. Denote the position of $e$ by $r>k_1$ and rewrite:   $$s_1,  \dots, s_{k_1},\dots, s_{r-1}, e, s_{r+1}, \dots, s_{k_1+n}.$$ 
A valid coloring sequence for $\Gamma$ is then $$s_1,  \dots, s_{k_1},\dots, s_{r-1}, e_1, a, b, e_2, s_{r+1}, \dots, s_{k_1+n}.$$
Here, the seeds are $\{s_1, \dots, s_{k_1}\}$ and we have chosen notation so that $e_1$ is the edge incident to segments contained in the boundary of fat-vertex which are among $\{s_1,  \dots, s_{r-1}\}$. It is clear that either $e_1$ or $e_2$ has this property since a coloring move was performed on $\Gamma_1$, coloring $e$ from one of the two fat-vertices it is incident to. Once $e_1$ is colored, $a$ and $b$ can be colored in any order; this, in turn, allows us to color $e_2$. All remaining coloring moves in this sequence are valid because so were the analogous coloring moves on $\Gamma_1$. 

Thus, we have exhibited a coloring sequence for $\Gamma$ starting with $k$ seeds, where by assumption $k$ is the number of vertices in $\Gamma_1^\ast$. Since $\Gamma$ was obtained from $\Gamma_1$ by adding an edge and a vertex, Euler characteristic shows that $\Gamma^\ast$ has $k$ vertices as well. Therefore, the coloring sequence produced has the desired number of seeds.

{\it Case B.} Suppose that $\Gamma$ is obtained from $\Gamma_1$ by adding an edge $e$. Denote by $a$ and $b$ the segments in $\Gamma_1$ containing the endpoints of the new edge $e$. We consider operations (2) and (3) simultaneously, that is, we allow for the possibility that $a$ and $b$ are the same arc. 
If $a$ and $b$ are distinct, let $a_1, a_2$ denote the segments in $\Gamma$ into which $e$ subdivides $a$ and, similarly, let $b_1, b_2$ be the segments in $\Gamma$ into which $e$ subdivides $b$. 
We have $$S(\Gamma)=\{S(\Gamma_1)\backslash\{a, b\} \}\cup\{a_1, a_2, e, b_1, b_2\}.$$
In the case where $a$ and $b$ are the same arc, the endpoints of $e$ subdivide $a$ into arcs $a_1$, $a_2$, and $b_2$ and the new edges are $\{e, a_1, a_2, b_2\}$. In what follows, assume that $a_1$ and $a_2$ are incident to the same vertex of $e$. 

Denote by $s_1,  \dots, s_{k_1}, \dots, s_m$ a coloring sequence for $\Gamma_1$ in which  $\{s_1,  \dots, s_{k_1}\}$ are the seeds. From this, we will construct the desired coloring sequence for $\Gamma$. Since $\Gamma$ is obtained from $\Gamma_1$ by adding an edge, by Euler characteristic we see that $\Gamma^\ast$ has $k_{1}+1$ vertices, so we can use an extra seed when coloring $\Gamma$.

Since $a$ and $b$ are segments in $\Gamma_1$, they appear in the coloring sequence as some $s_i, s_j$ where, without loss of generality, $i\leq j$. 
 We can rewrite the sequence as 
$$s_1, \dots, s_{i-1}, a, s_{i+1},\dots, s_{j-1}, b, s_{j+1}, \dots, s_m.$$ 
Here, $a$ is either a seed (if $i\leq k_1$) or becomes colored via a coloring move (if $i>k_1$). We consider both cases.

{\it Case B1.} If $a$ is a seed, the labels $a_1$ and $a_2$ can be assigned arbitrarily to the segments into which the new edge $e$ subdivides $a$.  A possible coloring sequence for $\Gamma$ with $k_1+1$ seeds will be:
$$s_1, \dots, s_{i-1}, a_1, a_2, s_{i+1}, \dots, s_{k_1},  e, b_1, b_2, s_{k_{1}+1},\dots, \hat{s_{j}} \dots, s_m.$$ 
(If $a=b$, omit $b_1$.) 

The seeds here are $\{s_1, \dots, s_{i-1}, a_1, a_2, s_{i+1}, \dots, s_{k_1}\}$. Let us check that the sequence is valid, that is, each segment which appears after $s_{k_1}$ is colored by a permitted coloring move. The edge $e$ can be colored since one of its vertices is incident to the two arcs $a_1, a_2$, both of which precede $e$ in the sequence. Next, $b_1, b_2$ can be colored since they are incident to $e$.  All remaining coloring moves are valid because so were the analogous coloring moves on $\Gamma_1$. 

{\it Case B2.} If $a$ was colored via a coloring move, it follows that there is an edge $s_l$, $l<i$, which shares an endpoint with $a$. 
To write down a coloring sequence in this case, say $a_2$ is the segment in $\Gamma$ which shares an endpoint with $s_l$ after subdivision. With this notation, a possible coloring sequence for $\Gamma$ is 
$$a_1, s_1, \dots, s_{k_1}, \dots, s_{i-1}, a_2, e, b_1, b_2, s_{i+1},\dots, \hat{s_{j}}, \dots, s_m$$ 
 (Again, if $a=b$, omit $b_1$.) 

The $k_{1}+1$ seeds here are $\{a_1, s_1, \dots, s_{k_1}\}$. Again, we need to verify that the coloring moves performed are valid. By assumption, $a_2$ is  incident to the colored edge $s_l$, $l<i$, so $a_2$ can be colored. Then, as in the previous case, the edge $e$ can be colored since it is incident to $a_1$, and $b_1, b_2$ can be colored since they are incident to $e$. The remaining coloring moves are valid because so were the analogous coloring moves on $\Gamma_1$. 

Therefore, any $\Gamma\in\mathfrak{G}$ can be colored from as many seeds as the number of vertices in~$\Gamma^\ast$.
\end{proof}

Proposition~\ref{upper-bound} is now a simple consequence of the above results. Recall that $L$ denotes a link with reduced diagram $D$, associated fat-vertex graph $\Gamma$ and Coxeter graph $\Gamma^\ast$. Let $v(\Gamma^\ast)$ be the number of vertices in this graph. Combining Equation~\ref{bwm} with Lemmas~\ref{w(g)} and~\ref{v*>w} gives
 $$v(\Gamma^\ast)\geq\omega(\Gamma)\geq\omega(D)\geq \beta(L).$$

\section{Meridional rank conjecture for bipartite arborescent links}
\label{trees}

The proof of Theorem~\ref{arbs} is again based on the existence of Coxeter quotients whose rank matches the Wirtinger number in a suitable link diagram. We start by deriving an upper bound on the bridge number of general arborescent links.

\begin{proposition} \label{leaves}
Let $L(T)$ be an arborescent link associated with a plane tree $T$ with arbitrary weights. Then the bridge number of $L(T)$ is bounded above by the number of leaves of $T$. 
\end{proposition}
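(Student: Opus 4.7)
The plan is to proceed by induction on $|E(T)|$, producing an explicit bridge presentation of $L(T)$ with at most the number of leaves of $T$ as bridges. I assume throughout that $T$ has at least one edge, so that the plumbing construction is non-trivial.

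For the base case $|E(T)|=1$, $T$ is a single edge whose two endpoints are both leaves. The link $L(T)$ is obtained by plumbing two twisted annuli along a common disk, which yields a rational link, hence a $2$-bridge link. This matches the count of two leaves.

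For the inductive step, let $T$ have $k$ leaves and $|E(T)| \geq 2$, and pick a leaf $\ell$ of $T$ adjacent to a vertex $v$. Set $T' := T \setminus \{\ell\}$, so that by induction $L(T')$ admits a bridge presentation with at most the number of leaves of $T'$ as bridges. I split into two cases. If $\deg_T(v) \geq 3$, then $v$ still has degree at least two in $T'$, so $T'$ has $k-1$ leaves; I would show that the $\ell$-annulus can be plumbed onto the existing Seifert surface of $L(T')$ at $v$'s site by drawing it as a small cap protruding from the diagram, contributing exactly one new local maximum. If $\deg_T(v) = 2$, then $v$ becomes a leaf of $T'$ and $T'$ still has $k$ leaves; the new $\ell$-annulus then extends the chain terminating at the old cap at $v$, and that cap can be re-drawn to absorb the new twist region without introducing a new local extremum. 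Either way, $\beta(L(T)) \leq k$.

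The main obstacle is the geometric argument in the branching case $\deg_T(v) \geq 3$: namely, making rigorous the claim that plumbing a single annulus onto an existing Seifert surface of a link contributes at most one new bridge. This rests on the local nature of plumbing --- the annulus is attached along a small disk in the existing surface --- combined with the freedom to draw the remainder of the new twist region as a tab with its own local maximum, disjoint from the rest of the diagram. An alternative route, more in the spirit of Section~\ref{upper}, would be to take the natural plane diagram $D$ of $L(T)$ coming from the plane embedding of $T$, place one seed in each leaf's twist region, and propagate Wirtinger relations through each twist region and across each plumbing to obtain $\omega(D) \leq k$; Theorem~\ref{w=b} then converts this into the desired bound $\beta(L(T)) \leq k$.
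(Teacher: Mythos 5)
Your primary argument (induction on $|E(T)|$, peeling off one leaf annulus at a time and adding back ``a small cap'') has a genuine gap, and it is precisely at the step you flag as the main obstacle. The inductive hypothesis you invoke is a statement about the link invariant $\beta(L(T'))$: it tells you that \emph{some} bridge position of $L(T')$ has at most $k'$ maxima, but it gives you no control over where the plumbing site at $v$ sits in that position, nor even that the plumbed surface is visible in it. So the phrases ``the old cap at $v$ can be re-drawn to absorb the new twist region'' and ``drawing it as a small tab protruding from the diagram'' have nothing concrete to attach to: a minimal bridge position of $L(T')$ need not present the twist region at $v$ as a cap at all. To make this work you must strengthen the induction so that it produces a \emph{specific diagram} (with a bound on its number of maxima, or on its Wirtinger number) in which every leaf's twist region, and every potential plumbing site, has a prescribed standard local form. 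That strengthening is exactly what the paper's proof supplies: it works throughout with the natural arborescent diagram of $L(T)$ and bounds its Wirtinger number, inducting on the number of leaves by deleting an entire branch (a whole rational tangle), not a single edge; the standard local form is then automatic at every stage. Without it, both of your cases ($\deg_T(v)\geq 3$ and $\deg_T(v)=2$) are assertions rather than arguments.

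Your alternative route in the final sentences is essentially the paper's actual proof: color the natural plane diagram, get $\omega(D)\leq k$, and apply Theorem~\ref{w=b}. Two caveats if you pursue it. First, ``one seed in each leaf's twist region'' is not quite enough on its own: a single colored strand inside a twist region colors nothing until a second strand at one of its crossings is colored, so the seeds must be placed so that the coloring relays correctly from tangle to tangle. The paper places one seed inside each branch's rational tangle, positioned so that together with one colored strand entering the tangle the coloring propagates to all four boundary strands, plus one initial seed $s_0$; the total is still the number of leaves. Second, the induction should remove a whole branch (leaf-to-branching-vertex chain), reducing the leaf count by exactly one, rather than a single edge; this is what lets the two-bridge base case and the relay structure line up.
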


In view of Theorem~\ref{arbs}, we may expect the bridge number of arborescent links to equal the number of leaves in the underlying tree. However, this is false. The knot associated with the even weight tree with four leaves shown in Figure~\ref{badtree} turns out to be a three-bridge knot. 

\begin{figure}[h]
\begin{center}
\includegraphics[scale=1.8]{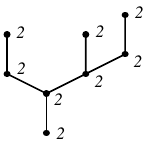}
\caption{Non-bipartite tree with even weights defining a 3-bridge knot. \label{badtree}}
\end{center}
\end{figure}

Incidentally, this is where our condition on trees originates from. The bipartite structure of trees turns out to be essential in proving the existence of Coxeter quotients whose rank is equal to the number of leaves.

\begin{proof}[Proof of Proposition~\ref{leaves}]
We will prove that the Wirtinger number of the natural arborescent diagram of $L(T)$ is bounded above by the number $n$ of leaves of $T$, by induction on $n$. The base case -- two leaves -- is easy, since the family of links associated with even weight trees with two leaves coincides with the class of two-bridge links~\cite{conway1970enumeration}.

Let $T$ be a plane tree with $n$ leaves. While there is no canonical link diagram associated to $T$, there is a natural construction depending on an initial choice of branching point in $T$.  We highlight an essential property of this construction.  A branch in $T$ is a chain of edges connecting a leaf to the first vertex of valency at least 3. The diagram is then built from a single twisted band by successively adding $n-1$ rational tangles, one for each branch in $T$. The convention can be chosen so that the ``rightmost" branch in the tree corresponds to a rational tangle, labeled $B$ in Figure~\ref{bottomtangle}, located in the bottom right corner of the diagram. This rightmost branch is the one we add in the inductive step, thereby increasing the number of leaves in $T$ by one and simultaneously adding a rational tangle to a diagram assumed colorable with $n-1$ seeds.

We now show that the diagram of $L(T)$ is colorable with $n$ seeds, one of which, say $s_0$, is at the bottom right of the diagram.
\begin{figure}[h]
\begin{center}
\includegraphics[scale=1.0]{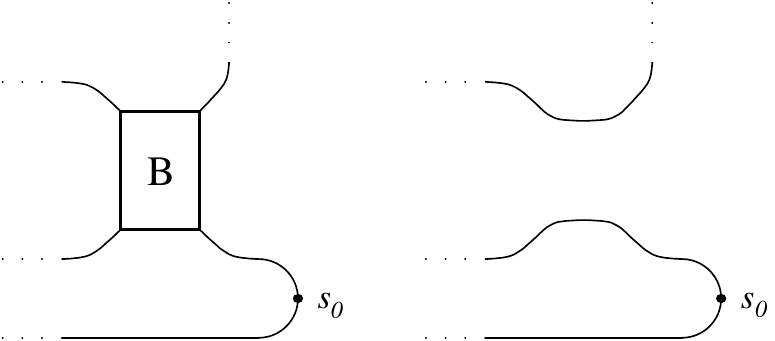}
\caption{Bottom tangle~$B$ and seed \label{bottomtangle}}
\end{center}
\end{figure}

The key observation is that we can place a new seed $s_1$ inside the tangle $B$, so that the partial coloring defined by $s_0$ and $s_1$ propagates to the four outgoing strands of $B$. This is illustrated in Figure~\ref{tangleseed}, for a rational tangle of even and odd length (the sign and number of crossings are irrelevant there). But now we are done, since we can remove the tangle $B$, as shown on the right of Figure~\ref{bottomtangle}, which amounts to removing one branch of the tree $T$, and use the induction hypothesis for trees with $n-1$ leaves. This procedure yields one seed per rational tangle, in addition to the initial seed~$s_0$, as seen in Figure~\ref{arborescentlink}.
\begin{figure}[h]
\begin{center}
\includegraphics[scale=1.0]{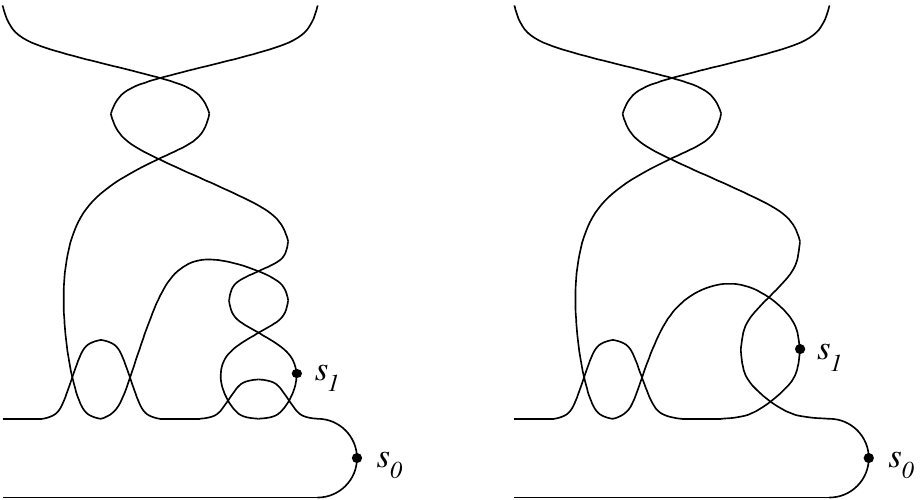}
\caption{Coloring rational tangles \label{tangleseed}}
\end{center}
\end{figure}
\end{proof}

The following proposition settles the proof of Theorem~\ref{arbs}; together with Proposition~\ref{leaves}, it provides the desired equality between the bridge number of the link $L(T)$, its meridional rank, and the number of leaves of the underlying even weight bipartite tree $T$.

\begin{proposition} \label{coxeterquotient}
Let $L(T)$ be an arborescent link associated with a plane bipartite tree $T$ with non-zero even weights. Then the fundamental group of $S^3\backslash L(T)$ admits a Coxeter group quotient whose rank is equal to the number of leaves of $T$. In particular, the meridional rank of the link $L(T)$ is bounded below by the number of leaves of $T$.
\end{proposition}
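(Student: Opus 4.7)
The plan is to induct on the number of leaves $n$ of $T$. The base case $n=2$ is the class of two-bridge links, which admit a rank-$2$ Coxeter quotient via the rank-$2$ Coxeter labeling of the underlying rational tangle (with boundary pattern $xxyy$), as recalled at the end of Section~\ref{lower}.

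For the inductive step, pick any branch $b$ of $T$, that is, a chain of edges from a leaf $\ell$ to a branching vertex $v$, and set $T':=T\setminus b$. Then $T'$ is still a plane bipartite tree with non-zero even weights and it has $n-1$ leaves. By induction, $\pi_1(S^3\setminus L(T'))$ surjects onto a rank-$(n-1)$ Coxeter group $C'$ via a homomorphism $\phi'$ sending meridians to reflections, assembled from the rank-$2$ Coxeter labelings of the individual rational tangles in the diagram of $L(T')$. The strengthened inductive hypothesis is a uniformity statement: at every black vertex $w$ of $T'$ (the color carrying all branching vertices), the boundary strands of all rational tangles plumbed at $w$ that lie on a common side of $w$ share a single reflection label in $C'$. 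In particular there is a well-defined reflection $r_v\in C'$ labeling the two strands of $L(T')$ on the side of $v$ where $b$ is to be re-attached.

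Given $r_v$, I would build $C$ by adjoining a new generator $s_\ell$ to $C'$ together with the single Coxeter relation $(r_v s_\ell)^{\alpha}=1$, where $\alpha$ is the numerator of the continued fraction associated with the branch $b$. The group $C$ is Coxeter of rank $n$. A rank-$2$ Coxeter labeling of the new rational tangle of $b$ using the pair $\{r_v,s_\ell\}$ agrees with $\phi'$ at the two strands along which $b$ plumbs onto $v$ (both labeled $r_v$, by the strengthened hypothesis) and therefore extends $\phi'$ to a surjection $\phi:\pi_1(S^3\setminus L(T))\to C$ mapping meridians to reflections. Proposition~\ref{lowerbound} then gives $\mu(L(T))\geq r(C)=n$, as desired. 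It only remains to verify that this newly constructed $\phi$ again satisfies the strengthened uniformity hypothesis at $v$ in $T$, which is immediate from the construction, so the induction can continue.

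The main obstacle, and the step where bipartiteness is essential, is the uniformity claim itself. Each rational tangle meeting a black vertex $w$ has boundary labels in the cyclic pattern $xxyy$ and so contributes to the $w$-disk two adjacent boundary strands bearing a single label. Bipartiteness forces all tangles plumbed at $w$ to plumb with the same orientation, so that each contributes its ``same-label'' pair of boundary strands on the side facing $w$; these pairs can then be consistently identified to one reflection $r_w\in C'$. Without the bipartite hypothesis, as illustrated by Figure~\ref{badtree}, differing plumbing orientations at a branching vertex would force ``different-label'' pairs to be matched across a plumbing, forcing extra Coxeter relations that collapse the rank of the quotient, consistent with the corresponding link in that figure being only $3$-bridge.
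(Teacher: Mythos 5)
Your skeleton --- induction on the number of leaves, two-bridge base case, a strengthened hypothesis that branching vertices carry a single reflection label, and one new Coxeter generator per added branch --- matches the paper's Case~1 (attaching a branch at a vertex that keeps valency at least three after the branch is removed). But there is a genuine gap in the inductive step. You delete a branch $b$ at a branching vertex $v$ of $T$ and then invoke uniformity of labels at the branching vertices of $T'$. If $v$ has valency exactly $3$ in $T$, it has valency $2$ in $T'$ and is no longer a branching vertex: its twist region is absorbed into the interior of a longer rational tangle of $L(T')$, whose internal strands are labeled by conjugates of generators (words of the form $(ab)^k a$), not by a single reflection from the generating set. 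The reflection $r_v$ your construction needs simply does not exist there, so $C := C' \ast \langle s_\ell\rangle / \bigl((r_v s_\ell)^{\alpha}\bigr)$ has nothing to attach to. This case cannot be dodged by choosing the branch cleverly --- for instance when every branching vertex of $T$ is trivalent, every choice of $b$ lands you in it.

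The paper handles exactly this situation as its Case~2: when the new tangle $X$ is inserted into the middle of an existing rank-$2$-labeled rational tangle $B$ with boundary generators $a,b$, the new generator $x$ acquires Coxeter relations with \emph{both} $a$ and $b$, and, crucially, the old relation $(ab)^k=1$ coming from $B$ must be \emph{replaced} by $(ab)^2=1$. One must therefore perform surgery on the relations of $C'$, not merely adjoin a generator; this replacement is the one and only place where evenness of the weights is used. Your proposal carries the evenness hypothesis along but never invokes it, which is a symptom of the missing case. (Two smaller points: even in the valency $\geq 4$ case the Wirtinger relations force Coxeter relations between the new generator and the two neighbouring generators, not just one, and these must be included in the presentation of $C$ for $\phi$ to be well defined; and your account of bipartiteness is in the right spirit --- it guarantees all branching vertices correspond to twist regions of the same type, so each can carry a single label --- but the decisive extra ingredient is the relation replacement above.)
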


\begin{proof}
The proof is again by induction on the number of leaves of the tree $T$.
Here the case of two leaves is less obvious, since it amounts to proving that the fundamental group of non-trivial two-bridge links admits a Coxeter group quotient of rank two. This is just what we did in the last paragraph of Section~\ref{lower}. To be more precise, we only dealt with the case of two-bridge knots there. The case of two component two-bridge links is trivial, since their fundamental group admits $\mathbb{Z}^2$ as a quotient, thus the rank two Coxeter group~$(\mathbb{Z}/2\mathbb{Z})^2$.

Let $T$ be a plane bipartite tree with non-zero even weights and $n$ leaves. Every vertex of $T$ correponds to a twist region in the natural diagram of $L(T)$. These come in two versions, horizontal and vertical, which alternate between adjacent vertices, as in Figure~\ref{arborescentlink}. The bipartite condition on the tree $T$ means that all vertices of valency at least three correspond to the same type of twist region, say the horizontal one. We will construct a Coxeter quotient $G$ of rank $n$ with the following additional property: For all vertices of valency at least three, the meridians of the corresponding twist region are sent to the same reflection in a generating set for $G$.

An example of such a quotient is defined by the labeling in Figure~\ref{arborescentlink}. The quotient group there is the Coxeter group generated by the four reflections $a,b,c,d$ satisfying the Coxeter relations
$$(ab)^4=1,(ac)^3=1,(bc)^2=1,(bd)^4=1,(cd)^5=1.$$
In that diagram, there are two twist regions carrying a single label ($a$ and $b$); they correspond to the two vertices of valency three.

For the inductive step, there are two cases to consider:

Case 1. A branch is added to the tree $T$, at a vertex $v$ of valency at least three. Suppose the arborescent link diagram associated with $T$ admits a labeling by elements of a rank $n$ Coxeter group defining a rank $n$ Coxeter quotient of the fundamental group. Additionally, we suppose that the twist region of the vertex $v$ carries a single label, say $a$, and every other branch of $T$ corresponds to a rational tangle with a rank $2$ Coxeter labeling. Moreover, by induction we can assume that the labels incident to the boundaries of these rational tangles are all taken from the generating set of the rank $n$ Coxeter group. Then, we can add a branch at $v$, i.e. we can add a rational tangle $X$ with a rank $2$ labeling at the twist region of $v$, by introducing a label $x$, as shown in Figure~\ref{addbranch1}. Here, $x$ denotes a new generator, which, together with the previous $n$ generators, defines a rank $n+1$ Coxeter quotient. 
\begin{figure}[h]
\begin{center}
\includegraphics[scale=1.0]{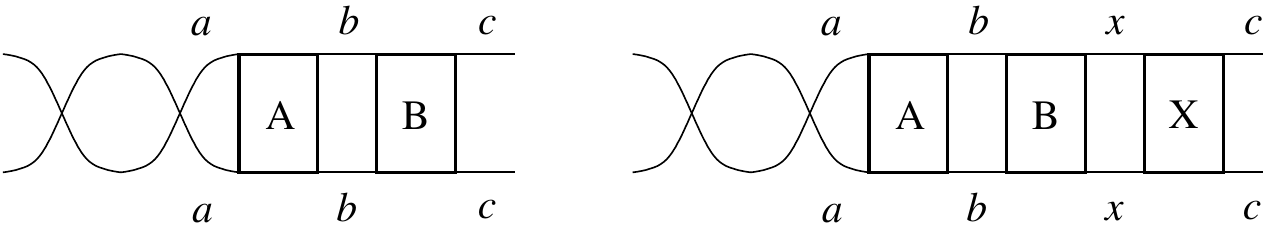}
\caption{Inserting a rational tangle, case~1. \label{addbranch1}}
\end{center}
\end{figure}
The reflection $x$ satisfies Coxeter relations with the two neighbouring generators ($b$ and $c$, in the figure). These are determined by the rational tangle $X$ and its neighbour ($B$, in the figure).

Case 2. A branch is added to the tree $T$, at a vertex $v$ of valency two. 

Again, we suppose the arborescent link diagram associated with $T$ admits a labeling by elements of a rank $n$ Coxeter group defining a rank $n$ Coxeter quotient of the fundamental group. Additionally, we suppose that the twist region of the vertex $v$ carries a single label, say $a$, and every other branch of $T$ corresponds to a rational tangle with a rank $2$ Coxeter labeling. Moreover, we assume that the labels incident to the boundaries of these rational tangles are all taken from the generating set of the rank $n$ Coxeter group. However, this time we cannot suppose that the twist region of the vertex $v$ carries a single label. Rather, the twist region of the vertex $v$ is part of a rational tangle $B$, as shown at the top of Figure~\ref{addbranch2}.
\begin{figure}[h]
\begin{center}
\includegraphics[scale=1.0]{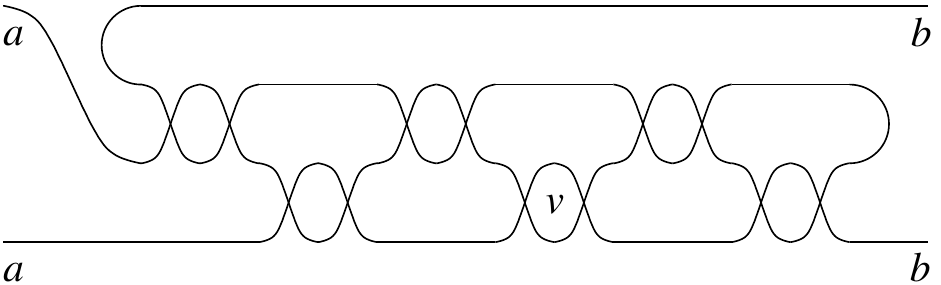}

\bigskip
\bigskip
\bigskip
\includegraphics[scale=1.0]{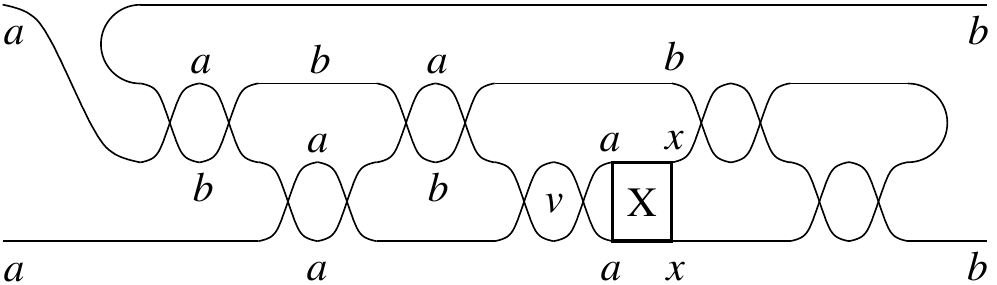}
\caption{Inserting a rational tangle, case~2. \label{addbranch2}}
\end{center}
\end{figure}

For illustration purposes, we chose a tangle with six twist regions, three of which are ``horizontal" (the ones on the bottom line). The labeling of the arborescent link diagram $L(T)$ associates Coxeter generators $a$ and $b$ to the four outgoing strands of the rank $2$ labeled rational tangle $B$, satisfying a Coxeter relation determined by $B$. Now we insert a rank $2$ labeled rational tangle $X$ at the twist region of the vertex $v$, and introduce a new Coxeter generator $x$, as shown in Figure~\ref{addbranch2}. The generator $x$ satisfies Coxeter relations with generators $a$ and $b$, determined by the rational tangle $X$, and the rational leftover tangle on the right of $X$. (For the explicit Coxeter relation, see again the last paragraph of Section~\ref{lower}.) Finally, the original Coxeter relation between $a$ and $b$ is replaced by $(ab)^2=1$, equivalently $ab=ba$. This is the only place where we use the condition that all weights are even, i.e. that all twist regions have an even number of crossings\footnote{When the tree has only one branching point, it is not necessary to restrict to even weights. In particular, our proof recovers the meridional rank conjecture for Montesinos links.}.

In both of the above cases, the number of leaves and the rank of the Coxeter quotient simultaneously increase by one. With this observation, we conclude the proofs of Proposition~\ref{coxeterquotient} and Theorem~\ref{arbs}.
\end{proof}

{\bf Acknowledgements.} Part of this work was completed while SB was visiting AK at the Max Planck Institute for Mathematics. We thank MPIM for its support and hospitality. RB and AK were also supported by NSF grants DMS-1821254 and DMS-1821257. 
We are grateful to Michel Boileau and Filip Misev for inspiring discussions, and to Curtis Bennett for many helpful conversations.

\bibliographystyle{amsplain}
\bibliography{wirtinger.bib}

\providecommand{\bysame}{\leavevmode\hbox to3em{\hrulefill}\thinspace}
\providecommand{\MR}{\relax\ifhmode\unskip\space\fi MR }
\providecommand{\MRhref}[2]{%
  \href{http://www.ams.org/mathscinet-getitem?mr=#1}{#2}
}
\providecommand{\href}[2]{#2}
\begin{thebibliography}{10}

\bibitem{a1998planar}
N.~A'Campo, \emph{Planar trees, slalom curves and hyperbolic knots},
  Publications Math{\'e}matiques de l'IH{\'E}S \textbf{88} (1998), 171--180.

\bibitem{baader2017symmetric}
S.~Baader and A.~Kjuchukova, \emph{Symmetric quotients of knot groups and a
  filtration of the {G}ordian graph}, Mathematical Proceedings of the Cambridge
  Philosophical Society \textbf{169} (2020), 141--148.

\bibitem{blair2019incompatibility}
R.~Blair, A.~Kjuchukova, and M.~Ozawa, \emph{The incompatibility of crossing
  number and bridge number for knot diagrams}, Discrete Mathematics
  \textbf{342} (2019), no.~7, 1966--1978.

\bibitem{blair2020wirtinger}
R~Blair, A~Kjuchukova, R~Velazquez, and P~Villanueva, \emph{Wirtinger systems
  of generators of knot groups}, Communications in Analysis and Geometry
  \textbf{28} (2020), no.~2, 243--262.

\bibitem{boileau2017meridionalrank}
M.~Boileau, E.~Dutra, Y.~Jang, and R.~Weidmann, \emph{Meridional rank of knots
  whose exterior is a graph manifold}, Topology and its Applications
  \textbf{228} (2017), 458--485.

\bibitem{boileau2017meridional}
M.~Boileau, Y.~Jang, and R.~Weidmann, \emph{Meridional rank and bridge number
  for a class of links}, Pacific Journal of Mathematics \textbf{292} (2017),
  no.~1, 61--80.

\bibitem{boileau1985nombre}
M.~Boileau and H.~Zieschang, \emph{Nombre de ponts et g\'en\'erateurs
  m\'eridiens des entrelacs de {M}ontesinos}, Comment. Math. Helv. \textbf{60}
  (1985), no.~1, 270--279.

\bibitem{boileau1989orbifold}
M.~Boileau and B.~Zimmermann, \emph{The {$\pi$}-orbifold group of a link},
  Math. Z. \textbf{200} (1989), no.~2, 187--208. \MR{978294}

\bibitem{brunner1992geometric}
A.~M. Brunner, \emph{Geometric quotients of link groups}, Topology and its
  Applications \textbf{48} (1992), no.~3, 245--262.

\bibitem{CS1978note}
S.~E. Cappell and J.~L. Shaneson, \emph{A note on the {S}mith conjecture},
  Topology \textbf{17} (1978), no.~1, 105--107.

\bibitem{conway1970enumeration}
J.~H. Conway, \emph{An enumeration of knots and links, and some of their
  algebraic properties}, Computational problems in abstract algebra, Elsevier,
  1970, pp.~329--358.

\bibitem{CH14}
C.~R. Cornwell and D.~R. Hemminger, \emph{Augmentation rank of satellites with
  braid pattern}, Communications in Analysis and Geometry \textbf{24} (2016),
  no.~5, 939--967.

\bibitem{doll1992generalized}
Helmut Doll, \emph{A generalized bridge number for links in 3-manifolds},
  Mathematische Annalen \textbf{294} (1992), no.~1, 701--717.

\bibitem{felikson2010reflection}
A.~Felikson and P.~Tumarkin, \emph{Reflection subgroups of {C}oxeter groups},
  Transactions of the American Mathematical Society \textbf{362} (2010), no.~2,
  847--858.

\bibitem{futer2008dehn}
D.~Futer, E.~Kalfagianni, and J.~S. Purcel, \emph{Dehn filling, volume, and the
  {J}ones polynomial}, Journal of Differential Geometry \textbf{78} (2008),
  no.~3, 429--464.

\bibitem{gabaigenera}
D.~Gabai, \emph{Genera of the arborescent links; {T}hurston {WPA} norm for the
  homology of 3-manifolds}, Mem. {A}mer. {M}ath. {S}oc. \textbf{59} (1986),
  no.~339, i--viii and 1--98.

\bibitem{hirasawa2002visualization}
M.~Hirasawa, \emph{Visualization of {A'C}ampo's fibered links and unknotting
  operation}, Topology and its Applications \textbf{121} (2002), no.~1-2,
  287--304.

\bibitem{kirby1995problems}
R.~Kirby, \emph{Problems in low-dimensional topology}, Proceedings of Georgia
  Topology Conference, Part 2 (R.~Kirby, ed.), 1995.

\bibitem{kitano2017note}
T.~Kitano and T.~Morifuji, \emph{A note on {R}iley polynomials of 2-bridge
  knots}, Ann. Fac. Sci. Toulouse Math. \textbf{26} (2017), no.~5, 1211--1217.

\bibitem{L04}
Marc Lackenby, \emph{The volume of hyperbolic alternating link complements},
  Proceedings of the London Mathematical Society \textbf{88} (2004), no.~1,
  204--224.

\bibitem{LM93}
M.~Lustig and Y.~Moriah, \emph{Generalized {M}ontesinos knots, tunnels and
  $n$-torsion}, Math. Ann. \textbf{295} (1993), no.~1, 167--189.

\bibitem{RZ87}
Markus Rost and Heiner Zieschang, \emph{Meridional generators and plat
  presentations of torus links}, Journal of the London Mathematical Society
  \textbf{2} (1987), no.~3, 551--562.

\bibitem{Schu54}
Horst Schubert, \emph{{\"U}ber eine numerische {K}noteninvariante}, Math. Z.
  \textbf{61} (1954), 245--288.

\bibitem{van2000link}
C.~Van Quach~Hongler and C.~Weber, \emph{The link of an extrovert divide},
  Annales de la Facult{\'e} des sciences de Toulouse: Math{\'e}matiques,
  vol.~9, 2000, pp.~133--145.

\end{thebibliography}

\end{document}